\documentclass [11pt] {article}
\usepackage{amsfonts}
\usepackage{amssymb}
\usepackage{times}
\usepackage{color}
\usepackage{graphicx}

\setlength{\topmargin}{-.5in}
\setlength{\textheight}{9in}
\setlength{\oddsidemargin}{5pt}
\setlength{\textwidth}{6.25in}

\newcommand{\Hom}{\mathrm{Hom}}
\newcommand{\aff}{\mathrm{Trans}}

\newtheorem{lemma}{Lemma}[section]

\newtheorem{theorem}{Theorem}

\newtheorem{proposition}{Proposition}[section]
\newtheorem{definition}{Definition}[section]

\def\Hom{\mbox{Hom}\,}

\def\<{\langle}
\def\>{\rangle}

\def\bA{{\bf A}}
\def\bB{{\bf B}}

\def\rk{{\rm rk}}
\begin{document}

\title{Structure of finite nilspaces and inverse theorems for the Gowers norms in bounded exponent groups.}
\author{{\sc Bal\'azs Szegedy}}

\maketitle

\abstract{A result of the author shows that the behavior of Gowers norms on bounded exponent abelian groups is connected to finite nilspaces. Motivated by this, we investigate the structure of finite nilspaces. As an application we prove inverse theorems for the Gowers norms on bounded exponent abelian groups. It says roughly speaking that if a function on $A$ has non negligible $U_{k+1}$-norm then it correlates with a phase polynomial of degree $k$ when lifted to some abelian group extension of $A$. 
This result is closely related to a conjecture by Tao and Ziegler. In prticular we obtain a new proof for the Tao-Ziegler inverse theorem.}

\tableofcontents

\section{Introduction}

In a recent paper \cite{Sz4} the author proved a general regularity lemma and inverse theorem for Gowers's uniformity norms $U_{k+1}$. The results in \cite{Sz4} connect the theory of certain algebraic structures, called nilspaces \cite{HKr2},\cite{NP}, with the theory of Gowers norms. Note that nilspaces are parallelepiped structures (introduced in a fundamental paper by Host and Kra \cite{HKr2}) in which cubes of every dimension are defined.

The main result in \cite{Sz4} says (very roughly speaking) that every function $f:A\rightarrow\mathbb{C}$ with $|f|\leq 1$ on a compact abelian group can be decomposed as $f=f_s+f_e+f_r$ where $f_e$ is a small error term, $f_s$ is a structured function related to an algebraic morphism $\phi:A\rightarrow N$ where $N$ is a bounded complexity $k$-step nilspace, and $\|f_r\|_{U_{k+1}}$ is very small in terms of the complexity of $N$.

It turns out that if $A$ is chosen from the special family of groups in which the order of every element divides a fixed number $q$ (called $q$ exponent groups) then the nilspace $N$ in the previous decomposition is finite and has exponent $q$ in the sense that it is built up from $k$ abelian groups of exponent $q$ as an iterated bundle.
This motivates us to analyze the structure of finite nilspaces.

Our main method is to produce every finite nilspace $N$ as a factor of a nilspace $M$ which is the direct product of {\it cyclic nilspaces}. (A cyclic nilspace is a cyclic abelian group endowed with a $k$-degree structure for some natural number $k$.) We show that every morphism $\phi:A\rightarrow N$ from an abelian group $A$ to $N$ can be lifted as a morphism $\psi:B\rightarrow M$ where $A$ is a factor group of $B$.

Using these results combined with the results in \cite{Sz4} we get inverse theorems for the Gowers norms in bounded exponent groups. The next definition is from \cite{TZ}

\begin{definition} A phase polynomial $\phi:A\rightarrow\{x|x\in\mathbb{C},|x|=1\}$ of degree $k$ on an abelian group $A$ is a function which trivializes after $k+1$ consecutive application of operators $\Delta_t$ defined by $\Delta_t f(x)=f(x)\overline{f(x+t)}$ and $t\in A$. 
\end{definition}

\begin{theorem}[Inverse theorem for bounded exponent groups]\label{boundexinv}
Let $q$ be a fixed natural number. For every $\epsilon>0$ there is $\delta>0$ such that if $f:A\rightarrow\mathbb{C}$ with $|f|\leq 1$ is a function on the abelian group $A$ of exponent $q$ with $\|f\|_{U_{k+1}}\geq\epsilon$ then there is an extension $B$ of $A$ with the same rank as $A$ and phase polynomial function $\phi:B\rightarrow\mathbb{C}$ of degree $k$ such that $(f,\phi)\geq\delta$.
\end{theorem}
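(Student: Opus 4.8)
The plan is to feed the function $f$ into the regularity-and-inverse decomposition of \cite{Sz4} and then convert the resulting nilspace morphism into a genuine phase polynomial by means of the structural results announced above. First I would apply the decomposition of \cite{Sz4} to write $f=f_s+f_e+f_r$, where $f_e$ is an error of $L^2$-norm at most (say) $\epsilon/4$, where $\|f_r\|_{U_{k+1}}$ is as small as we like (chosen below $\epsilon/4$), and where $f_s$ is a bounded-complexity function of an algebraic morphism $\psi:A\to N$ into a finite $k$-step nilspace $N$ of exponent $q$. Since the hypothesis gives $\|f\|_{U_{k+1}}\ge\epsilon$ while both $\|f_r\|_{U_{k+1}}$ and the contribution of $f_e$ are negligible, the triangle inequality for the $U_{k+1}$ norm forces $\|f_s\|_{U_{k+1}}$ to be bounded below by a positive quantity depending only on $\epsilon$; in particular $f$ correlates with $f_s$ at a level $\gg_\epsilon 1$.

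Next, because $f_s$ is a function of $\psi$ and $N$ has bounded complexity, I would expand $f_s$ into a bounded number of pieces, each of the form $\theta\circ\psi$ for a character $\theta:N\to\mathbb{C}$ of the nilspace, and pigeonhole to extract a single character $\theta$ with $|(f,\theta\circ\psi)|\ge\delta_0$ for some $\delta_0=\delta_0(\epsilon,q,k)>0$. The task then reduces to showing that $\theta\circ\psi$ becomes a degree-$k$ phase polynomial after a suitable extension of the domain group.

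This is where the structure theory enters. I would realize $N$ as a factor of a direct product $M$ of cyclic nilspaces and lift $\psi:A\to N$ to a morphism $\Psi:B\to M$ with $A$ a factor group of $B$, arranging the lift so that $B$ has the same rank as $A$. Pulling $\theta$ back through the factor map $M\to N$ and composing with $\Psi$ yields a function $p:B\to\mathbb{C}$. The key point, which I would verify by a direct computation on each cyclic factor, is that a character of a cyclic nilspace composed with a morphism is precisely a degree-$k$ phase polynomial, that is, it is trivialized by $k+1$ applications of the operators $\Delta_t$; since $M$ is a product of finitely many cyclic nilspaces, $p$ is a finite product of such functions and hence is itself a degree-$k$ phase polynomial on $B$.

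Finally I would transport the correlation from $A$ to $B$: lifting $f$ along the quotient map $B\to A$ preserves $|f|\le 1$, and because the lift of $\theta\circ\psi$ coincides with $p$ along the fibres of $B\to A$, the two averages agree, giving $(f,p)\ge\delta$ with $\delta:=\delta_0(\epsilon,q,k)$. The main obstacle is the third step: one must guarantee simultaneously that the lift $\Psi$ exists with $B$ of the same rank as $A$, and that the pulled-back character is genuinely a phase polynomial of the correct degree, with every complexity bound uniform in $A$. The uniformity of $\delta$ in the size and structure of $A$ rests entirely on the boundedness of the complexity of the nilspace $N$ produced by \cite{Sz4}, so the delicate bookkeeping lies in propagating that bound through the factorization $N=M/\!\sim$ and the rank-preserving lift.
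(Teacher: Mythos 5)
Your overall architecture coincides with the paper's: apply the regularity lemma of \cite{Sz4}, extract correlation of $f$ with the structured part $f_s=g\circ\psi$ for a morphism $\psi:A\rightarrow N$ into a bounded-complexity finite nilspace, lift $\psi$ through a rank-preserving extension $B$ of $A$ to a morphism $\Psi:B\rightarrow M$ into a product of cyclic nilspaces, and turn characters of cyclic factors into degree-$k$ phase polynomials. However, your middle step contains a genuine gap. You expand $f_s$ on $A$ into boundedly many pieces $\theta\circ\psi$ with $\theta$ a ``character of the nilspace $N$'', and you later treat the pullback of $\theta$ along the factor map $\beta:M\rightarrow N$ as if it were a character of the abelian group underlying $M$, i.e.\ a product of characters of the cyclic factors. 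Neither half of this can be arranged simultaneously. A finite $k$-step nilspace $N$ carries no group structure, so ``character of $N$'' is undefined; if instead you take $\theta$ from some bounded spanning family of functions on $N$ (say delta functions), then the pullback $m\mapsto\theta(\beta(m))$ is merely a function on $M$ constant on the fibres of $\beta$, and since $\beta$ is a nilspace factor map rather than a group quotient, those fibres are not cosets of a subgroup of $M$. The space of fibre-constant functions is then not spanned by the characters of $M$ that descend to $N$ (for a generic partition only the trivial character is fibre-constant), so your function $p$ need not be a product of character-composed-with-morphism terms, and the claim that $p$ is a degree-$k$ phase polynomial is unsupported. Conversely, if you insist that $\theta$ pull back to a genuine character of $M$, such $\theta$ do not span the functions on $N$, and the expansion-plus-pigeonhole step collapses.

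The paper resolves this by performing the two operations in the opposite order: lift first, expand second. Writing $f_s=g\circ\psi$ and lifting along the projection $\tau:B\rightarrow A$, the lifted structured part is $b\mapsto g(\beta(\Psi(b)))$, where now $g\circ\beta$ is just \emph{some} bounded function on the finite abelian group underlying $M$ (the paper takes $M$ in the reduced form (\ref{redukalt}) supplied by Lemma \ref{invprep2}, so that this group is $\prod_{i=1}^{k-1}\mathbb{Z}_{e^\alpha}^{a_i}\times A_k$). One expands $g\circ\beta$ there in honest characters $\chi$ of that group; the number of terms is at most $|M|$, which is controlled by the complexity bound of the regularity lemma, so the pigeonhole --- now carried out over $B$ rather than over $A$ --- still produces a single $\chi$ with $|(f\circ\tau,\chi\circ\Psi)|\geq\delta$. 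Each $\chi\circ\Psi$ is, by exactly the computation you call the key point, a product over the factors $\mathcal{D}_i$ of a character composed with a degree-$i$ polynomial map, hence a phase polynomial of degree $k$. With this reordering (and no other change: your rank-preserving lift and the transport of the correlation from $A$ to $B$ are as in the paper) your outline becomes the paper's proof.
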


Note that in the above theorem $f$ is interpreted also as a function on $B$ by composing the projection $B\rightarrow A$ with $f$. This makes it possible to take the scalar product $(f,\phi)$.
We can also formulate this inverse theorem in a trivially equivalent but slightly more conventional form if we introduce the notion of a projected phase polynomial.

\begin{definition} A function $f$ is called a {\bf projected phase polynomial} (of degree $k$) on a finite abelian group $A$ if there is an extension $$0\rightarrow C\rightarrow B\stackrel{\tau}{\rightarrow} A\rightarrow 0$$ with $\rk(A)=\rk(B)$ and a phase polynomial $\phi:B\rightarrow\mathbb{C}$ (of degree $k$) such that
$$f(a)=|C|^{-1}\sum_{b\in B,\tau(b)=a}\phi(b).$$
\end{definition}
Using this definition, theorem \ref{boundexinv} says the following.

\medskip

\noindent {\it If $\|f\|_{U_{k+1}}\geq\epsilon$ then $f$ correlates with a projected phase polynomial of degree $k$.}

\medskip
It is clear that projected phase polynomials are "purely structured" functions. This means that correlation with them implies non negligible $U_{k+1}$ norm. To see this, assume that $g$ correlates with a projected phase polynomial. Then on the extended group $B$ the function $g'=\tau\circ g$ correlates with a phase polynomial of degree $k$ and so $\|g'\|_{U_{k+1}}$ is non negligible. On the other hang $\|g'\|_{U_{k+1}}=\|g\|_{U_{k+1}}$.

It will be important that theorem \ref{boundexinv} can be strengthened in two ways. 

\begin{theorem}[Extended inverse theorem]\label{boundext} In theorem \ref{boundexinv} we can also assume that
\begin{enumerate} 
\item $\phi^m=1$ for some $m=q^i$ where $i$ is bounded in terms of $\epsilon$,
\item $\phi=\phi_1\phi_2$ where $\phi_1$ is a degree $k-1$ phase polynomial and $\phi_2$ takes only $q$-th roots of unities.
\end{enumerate} 
\end{theorem}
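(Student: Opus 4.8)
The plan is to prove Theorem \ref{boundext} as a refinement of Theorem \ref{boundexinv}, leveraging the structural description of the lifting nilspace $M$ as a direct product of cyclic nilspaces. Recall that the proof of Theorem \ref{boundexinv} produces the phase polynomial $\phi$ on $B$ from a morphism $\psi:B\to M$, where $M$ is a factor-lift realized as a product of cyclic nilspaces, each being a cyclic group equipped with a degree-$k$ structure. Since $A$ has exponent $q$ and $B$ is chosen of the same rank, the relevant cyclic groups appearing in $M$ can be taken to be cyclic $q$-groups $\mathbb{Z}/q^j\mathbb{Z}$ with the number of factors and the exponents $j$ bounded in terms of the complexity of the nilspace $N$, which in turn is bounded in terms of $\epsilon$ (via the regularity/inverse theorem of \cite{Sz4}). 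The key observation is that a phase polynomial of degree $k$ arising from a morphism into a cyclic nilspace of bounded complexity automatically takes values in the group of $m$-th roots of unity for $m=q^i$ with $i$ bounded.

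For part (1), I would first make explicit that the phase polynomial $\phi$ is constructed by composing $\psi:B\to M$ with a canonical character-type function on $M$. Because $M$ is a finite product of cyclic nilspaces built from cyclic $q$-groups of bounded exponent, every coordinate function, and hence $\phi$, factors through the values of these cyclic groups under the natural identification of $\mathbb{Z}/q^j\mathbb{Z}$ with $q^j$-th roots of unity. Taking $i$ to be the maximal exponent $j$ occurring (bounded by the complexity bound of $N$, and therefore by $\epsilon$ and $q$) gives $\phi^{q^i}=1$. The main technical point here is to verify that the degree-$k$ structure on the cyclic nilspace does not force higher-order roots of unity to appear: one must check that the natural generating phase function on $\mathbb{Z}/q^j\mathbb{Z}$, whose $k$-th discrete derivatives vanish, still takes only $q^j$-th roots of unity, which follows from the explicit form of the degree-$k$ structure on a cyclic group.

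For part (2), the plan is to decompose the phase polynomial $\phi$ according to the filtration by degree. A degree-$k$ phase polynomial on a cyclic $q$-group can be written as a product of a phase polynomial of degree $k-1$ and a factor coming purely from the top filtration layer; the top layer of the nilspace structure is an abelian group of exponent $q$, so the corresponding factor $\phi_2$ takes only $q$-th roots of unity. Concretely, I would split $\psi=(\psi', \psi'')$ along the structure map $M\to M/Z_k(M)$, where $Z_k(M)$ is the top structure group (of exponent $q$); the composition through $M/Z_k(M)$ yields $\phi_1$ of degree $k-1$, and the residual contribution from $Z_k(M)$ yields $\phi_2$ taking $q$-th roots of unity. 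The multiplicativity $\phi=\phi_1\phi_2$ is then the statement that the defining phase function on $M$ factors as a product over the filtration, which is essentially the cocycle description of the iterated bundle structure of $M$.

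I expect the main obstacle to be keeping the two refinements simultaneously compatible with the lifting construction, rather than either one in isolation. The delicate part is ensuring that when we pass to the extension $B$ and build $\psi:B\to M$, we can arrange $M$'s cyclic factors to be genuine $q$-power-order groups (so that both the order bound of (1) and the exponent-$q$ top layer of (2) hold), while still preserving the rank equality $\rk(A)=\rk(B)$ and the correlation bound $(f,\phi)\geq\delta$. This requires tracing through the lifting result — that every morphism $\phi:A\to N$ lifts to $\psi:B\to M$ with $A$ a factor of $B$ — and checking that in the bounded-exponent case the construction of $M$ can indeed be carried out using only cyclic $q$-groups of controlled exponent, so that the bound $i=i(\epsilon,q)$ emerges uniformly.
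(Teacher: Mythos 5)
Your overall route coincides with the paper's: lift $\phi$ through a bounded-height extension $B$ of $A$ to a morphism $\psi$ into a product of cyclic nilspaces (a ``modulo $q^\alpha$ free nilspace''), compose with characters to produce phase polynomials, get (1) from the bounded order of these characters, and get (2) by splitting off the top-degree layer. Part (1) of your argument is essentially the paper's. But part (2) contains a genuine gap: you assert that ``the top layer of the nilspace structure is an abelian group of exponent $q$,'' and this is false for the nilspace $M$ that the lifting construction produces. Theorem \ref{freefact} needs the top layer to be free, $\mathcal{D}_k(\mathbb{Z}^{a_k})$, in order to have the lifting property for morphisms, and the periodicity argument of Theorem \ref{moding} only reduces it to $\mathcal{D}_k(\mathbb{Z}_{q^\alpha}^{a_k})$, a group of exponent $q^\alpha$ with $\alpha>1$ in general. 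A character of that group composed with the top component $\psi_k$ takes $q^\alpha$-th roots of unity, not $q$-th roots, so the resulting $\phi_2$ does not satisfy (2); and this cannot be repaired by a cleverer choice of cyclic factors for $M$, since the lower and top layers must remain free enough for the lift to exist. (Already for $k=1$ the obstruction is visible: a faithful character of $\mathbb{Z}_{q^2}$ takes $q^2$ distinct values, hence is not a constant, i.e.\ degree $0$ phase polynomial, times a $q$-th-root-valued function.)

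The missing idea is the paper's Lemma \ref{invprep2}. After lifting, one exploits that $\beta:F\rightarrow N$ is a \emph{factor map}: it maps each $\sim_{k-1}$ class of $F$ \emph{onto} a $\sim_{k-1}$ class of $N$, and the latter are affine copies of $\mathcal{D}_k(A_k)$, where $A_k$ is the $k$-th structure group of $N$ and therefore has exponent $q$ because $N$ is a $\mathfrak{A}_q$-nilspace. Consequently $\beta$ factors through the quotient $F_{q^\alpha}(a_1,\dots,a_{k-1})\times\mathcal{D}_k(A_k)$, and one replaces $\psi$ by its composition with this quotient map. This keeps the lower layers cyclic of order $q^\alpha$ (which is what gives (1) and makes $\phi_1=\prod_{i=1}^{k-1}\chi_i\circ\psi_i$ a phase polynomial of degree $k-1$), while making the top layer exactly $A_k$, so that $\phi_2=\chi_k\circ\psi_k$ takes only $q$-th roots of unity. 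This is precisely the step your ``delicate part'' paragraph gestures at but does not supply: the exponent-$q$ top layer is not arranged by choosing the cyclic factors of $M$; it is obtained by pushing the free top layer down onto the structure group of $N$ itself, and the reason this is legitimate is the surjectivity on $\sim_{k-1}$ classes built into the definition of a factor map.
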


We will show that theorem \ref{boundext} implies the next theorem by Tao and Ziegler \cite{TZ}.

\begin{theorem}[Tao-Ziegler]\label{TZth}
Let $p$ be a fixed prime and $k\leq p-1$. For every $\epsilon>0$ there is $\delta>0$ such that if $f:A\rightarrow\mathbb{C}$ with $|f|\leq 1$ is a function on the abelian group $A$ of exponent $p$ with $\|f\|_{U_{k+1}}\geq\epsilon$ then there is a phase polynomial function $\phi:A\rightarrow\mathbb{C}$ of degree $k$ and with $\phi^p=1$ such $(f,\phi)\geq\delta$.
\end{theorem}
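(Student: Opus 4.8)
The plan is to deduce Theorem~\ref{TZth} from the Extended Inverse Theorem~\ref{boundext} by induction on $k$, the only place the hypothesis $k\le p-1$ entering being a descent lemma. I will freely use two standard facts: a product of phase polynomials is again a phase polynomial of degree at most the larger of the two degrees (because $\Delta_t(\alpha\beta)=\Delta_t\alpha\cdot\Delta_t\beta$), and for a surjective homomorphism $\tau\colon B\to A$ of finite abelian groups the Gowers norm is invariant under pullback, $\|g\circ\tau\|_{U_k}=\|g\|_{U_k}$, as already remarked after Theorem~\ref{boundexinv}.

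The heart of the argument is the following descent lemma, which I would establish first. \emph{If $B$ is a finite abelian $p$-group and $\psi\colon B\to\mathbb{C}$ is a phase polynomial of degree $\le k\le p-1$ with $\psi^p=1$, then $\psi$ factors through the projection $\tau\colon B\to B/pB$.} To see this, write $\psi=e_p\circ g$ with $g\colon B\to\mathbb{Z}/p\mathbb{Z}$ and $e_p(a)=e^{2\pi i a/p}$; the degree hypothesis becomes the vanishing of all $(k+1)$-fold additive derivatives of $g$, where $\partial_t g(x)=g(x+t)-g(x)$. The Newton forward-difference identity then gives $g(x+pt)=\sum_{j=0}^{k}{p\choose j}\,\partial_t^{\,j}g(x)$. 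For $1\le j\le p-1$ the coefficient ${p\choose j}$ is divisible by $p$ and hence vanishes in $\mathbb{Z}/p\mathbb{Z}$, while the term $j=p$ never occurs because $k\le p-1$; thus $g(x+pt)=g(x)$ for all $x,t$, so $g$ and therefore $\psi$ are invariant under $pB$. This is exactly where $k\le p-1$ is indispensable, and I expect it to be the main obstacle — everything after it is bookkeeping.

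I would then run the induction. For $k=0$, $\|f\|_{U_1}\ge\epsilon$ means $|\mathbb{E}f|\ge\epsilon$, and taking the $p$-th root of unity $c$ closest in argument to $\mathbb{E}f$ produces a degree-$0$ phase polynomial $\phi=c$ with $c^p=1$ and $(f,\phi)\ge\epsilon\cos(\pi/p)$. For the step, apply Theorem~\ref{boundext} to $f$ on $A$ (which has exponent $p$, so $A\cong\mathbb{F}_p^{\,n}$): this yields an extension $0\to C\to B\stackrel{\tau}{\to}A\to0$ with $\rk(B)=\rk(A)$ and $B$ a finite abelian $p$-group, together with $\phi=\phi_1\phi_2$ on $B$ of degree $k$ satisfying $(f\circ\tau,\phi)\ge\delta$, $\deg\phi_1\le k-1$ and $\phi_2^p=1$. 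Since $A$ has exponent $p$ we have $pB\subseteq\ker\tau=C$; as $B/pB$ and $A$ are $\mathbb{F}_p$-spaces of equal dimension $\rk(B)=\rk(A)$ and $\tau$ is onto, $\tau$ induces an isomorphism $B/pB\cong A$, so in fact $C=pB$ and $\tau$ is the natural projection. The descent lemma (applicable since $\deg\phi_2\le k\le p-1$ and $\phi_2^p=1$) now produces a phase polynomial $\tilde\phi_2\colon A\to\mathbb{C}$ of degree $\le k$ with $\tilde\phi_2^{\,p}=1$ and $\phi_2=\tilde\phi_2\circ\tau$.

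Finally I would discard $\phi_1$ as follows. From $(f\circ\tau,\phi_1\phi_2)\ge\delta$ we get $\big((f\circ\tau)\overline{\phi_2},\,\phi_1\big)\ge\delta$, and correlation with the degree-$\le(k-1)$ phase polynomial $\phi_1$ forces $\|(f\circ\tau)\overline{\phi_2}\|_{U_k}\ge\delta$ (the easy converse direction). Since $(f\circ\tau)\overline{\phi_2}=(f\,\overline{\tilde\phi_2})\circ\tau$, pullback-invariance of the Gowers norm gives $\|f\,\overline{\tilde\phi_2}\|_{U_k(A)}\ge\delta$. The function $f\,\overline{\tilde\phi_2}$ is bounded by $1$ and defined on the exponent-$p$ group $A$, so the inductive hypothesis (Theorem~\ref{TZth} in degree $k-1$, valid as $k-1\le p-1$) supplies a phase polynomial $\eta\colon A\to\mathbb{C}$ of degree $k-1$ with $\eta^p=1$ and $(f\,\overline{\tilde\phi_2},\eta)\ge\delta'$. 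Then $\psi:=\tilde\phi_2\,\eta$ is a phase polynomial on $A$ of degree $\le k$ with $\psi^p=\tilde\phi_2^{\,p}\eta^p=1$ and $(f,\psi)=(f\,\overline{\tilde\phi_2},\eta)\ge\delta'$, which is the assertion of Theorem~\ref{TZth}.
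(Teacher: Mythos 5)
Your proof is correct, but its second half takes a genuinely different and more elementary route than the paper's. The first half agrees in substance with the paper: the paper also deduces that $\phi_2$ descends to $A$ from the hypothesis $k\le p-1$, though it does so by lifting $\phi_2$ to $\mathbb{Z}^n$, writing the resulting $\mathbb{Z}_p$-valued polynomial as an integer combination of products $\prod_i {x_i\choose r_i}$ with $\sum_i r_i= k$, and using that ${x\choose r}\bmod p$ depends only on $x\bmod p$ when $r<p$; your Newton forward-difference identity $g(x+pt)=\sum_{j\le k}{p\choose j}\partial_t^{\,j}g(x)$ proves the same descent directly, without classifying polynomial maps. The divergence is in the treatment of $\phi_1$. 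The paper projects (fiber-averages) $\phi=\phi_1\phi_2$ down to $A$, obtaining $\gamma=\gamma_2\phi_3$ with $\gamma_2$ a \emph{projected} phase polynomial of degree $k-1$, and then must show that $\gamma_2$ is $L^2$-approximable by a bounded linear combination of genuine degree-$(k-1)$ phase polynomials on $A$; this occupies the final lemma of the paper and is proved by a non-quantitative ultraproduct compactness argument resting on the $\sigma$-algebra machinery of \cite{Sz1}, with the induction hypothesis invoked inside the limit object. You avoid all of this: you absorb $\phi_2$ into $f$, use the easy direction of the inverse theorem (correlation with the degree-$(k-1)$ phase polynomial $\phi_1$ forces $\|(f\circ\tau)\overline{\phi_2}\|_{U_k}\ge\delta$, since multiplication by $\phi_1$ preserves the $U_k$ norm), transfer this to $A$ by pullback-invariance, and apply the induction hypothesis directly on $A$ before multiplying $\tilde\phi_2$ back in. Your route is shorter, fully finitary, and keeps the dependence of constants explicit; the paper's route is heavier but yields as a byproduct an approximation theorem for projected phase polynomials that has independent interest. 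Two cosmetic remarks: the statement of Theorem \ref{boundext} does not literally assert that $B$ is a $p$-group (the paper's construction does produce a bounded-height extension, which is one), but your argument never really needs this assumption, since the identification $\ker\tau=pB$ already follows from surjectivity of the induced map $B/pB\to A$ together with $\dim_{\mathbb{F}_p}B/pB\le\rk(B)=\rk(A)$, and your descent lemma is valid over any finite abelian group; also, as usual, the correlations $(f,\phi)\ge\delta$ should be read as lower bounds on $|(f,\phi)|$ (or handled by rotating $\phi$ by a root of unity, as in your base case).
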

Note that in \cite{TZ} the authors also prove an inverse theorem for $k>p-1$ in which the function correlates with a phase polynomial of degree bounded in terms of $k$.

\section{Structure of finite nilspaces}

Roughly speaking, a nilspace is a structure in which cubes of every dimension are defined and they behave very similarly as cubes in abelian groups. 
An abstract cube of dimension $n$ is the set $\{0,1\}^n$.
A cube of dimension $n$ in an abelian group $A$ is a function $f:\{0,1\}^n\rightarrow A$ which extends to an affine homomorphism (a homomorphism plus a translation) $f':\mathbb{Z}^n\rightarrow A$. Similarly, a morphism $\psi:\{0,1\}^n\rightarrow\{0,1\}^m$ between abstract cubes is a map which extends to an affine morphism from $\mathbb{Z}^n\rightarrow\mathbb{Z}^m$.

\begin{definition}[Nilspace axioms] A nilspace is a set $N$ and a collection $C^n(N)\subseteq N^{\{0,1\}^n}$ of functions (or cubes) of the form $f:\{0,1\}^n\rightarrow N$ such that the following axioms hold.
\begin{enumerate}
\item {\bf(Composition)} If $\psi:\{0,1\}^n\rightarrow\{0,1\}^m$ is a cube morphism and $f:\{0,1\}^m\rightarrow N$ is in $C^m(N)$ then the composition $\psi\circ f$ is in $C^n(N)$.
\item {\bf(Ergodictiry)} $C^1(N)=N^{\{0,1\}}$.
\item {\bf(Gluing)} If a map $f:\{0,1\}^n\setminus\{1^n\}\rightarrow N$ is in $C^{n-1}(N)$ restricted to each $n-1$ dimensional face containing $0^n$ then $f$ extends to the full cube as a map in $C^n(N)$.
\end{enumerate} 
\end{definition}

\bigskip

If $N$ is a nilspace and in the third axiom the extension is unique for $n=k+1$ then we say that $N$ is a $k$-step nilspace.
If a space $N$ satisfies the first axiom (but the last two are not required) then we say that $N$ is a {\bf cubespace}. 
A function $f:N_1\rightarrow N_2$ between two cubespaces is called a {\bf morphism} if $\phi\circ f$ is in $C^n(N_2)$ for every $n$ and function $\phi\in C^n(N_1)$.
The set of morphisms between $N_1$ and $N_2$ is denoted by $\Hom(N_1,N_2)$. With this notation $C^n(N)=\Hom(\{0,1\}^n,N)$.
If $N$ is a nilspace then every morphism $f:\{0,1\}^n\rightarrow\{0,1\}^m$ induces a map $\hat{f}:C^m(N)\rightarrow C^n(N)$ by simply composing $f$ with maps in $C^m(N)$. 

Every abelian group $A$ has a natural nilspace structure in which cubes are maps $f:\{0,1\}^n\rightarrow A$ which extend to affine homomorphisms $f':\mathbb{Z}^n\rightarrow A$. We refer to this as the linear structure on $A$. For every natural number $k$ we can define a nilspace structure $\mathcal{D}_k(A)$ on $A$ that we call the $k$-degree structure.
A function $f:\{0,1\}^n\rightarrow A$ is in $C^n(\mathcal{D}_k(A))$ if for every cube morphism $\phi:\{0,1\}^{k+1}\rightarrow\{0,1\}^n$ we have that
$$\sum_{v\in\{0,1\}^{k+1}}f(\phi(v))(-1)^{h(v)}=0$$
where $h(v)=\sum_{i=1}^{k+1}v_i$.
We will use the next elementary lemma in this paper.

\begin{lemma}\label{derivmorph} Let $\phi:\mathcal{D}_i(A)\rightarrow\mathcal{D}_j(B)$ be a morphism. Then if $i>j$ then $\phi$ is constant.
If $1\leq i\leq j$ then $\phi$ is also a morphism from $\mathcal{D}_1(A)$ to $\mathcal{D}_{j-i+1}(B)$.
\end{lemma}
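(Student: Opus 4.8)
The plan is to use the explicit description of the cubes of a degree structure: a map $f:\{0,1\}^n\to A$ lies in $C^n(\mathcal{D}_k(A))$ exactly when, written in its unique multilinear form $f(v)=\sum_{S\subseteq[n]}c_S\prod_{l\in S}v_l$ with $c_S\in A$, one has $c_S=0$ for all $|S|>k$; that is, $f$ is a polynomial of degree at most $k$ in the $0/1$ coordinates. I would first record the one direction I actually need, namely that every degree-$\le k$ polynomial map is a cube: any cube morphism $\psi:\{0,1\}^{k+1}\to\{0,1\}^n$ is affine in each output coordinate, so $f\circ\psi$ is again a polynomial of degree $\le k$ in $k+1$ variables, and its top alternating sum $\sum_v(-1)^{h(v)}f(\psi(v))$ extracts the (vanishing) coefficient of $v_1\cdots v_{k+1}$. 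Conversely, membership in $C^m(\mathcal{D}_k(B))$ forces at once the alternating sum over the whole cube $\{0,1\}^m$ (the identity morphism) to vanish. These two remarks are the only facts about the axioms that I would use.

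For the first statement ($i>j\Rightarrow\phi$ constant) I would test $\phi$ on a single, highly nonlinear cube. Fix $a,a'\in A$ and define $f:\{0,1\}^i\to A$ by $f(v)=a+(a'-a)\prod_{l=1}^{i}v_l$, so that $f(1^i)=a'$ and $f(v)=a$ otherwise; this is degree $\le i$, hence $f\in C^i(\mathcal{D}_i(A))$. Then $\phi\circ f\in C^i(\mathcal{D}_j(B))$ has degree $\le j<i$, so its top multilinear coefficient $c_{[i]}$ vanishes. A one-line M\"obius computation gives $c_{[i]}=\phi(a')-\phi(a)$, whence $\phi(a')=\phi(a)$; as $a,a'$ were arbitrary, $\phi$ is constant.

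For the second statement I would reduce to a single alternating-sum identity and then lift an affine cube to a degree-$i$ cube. Put $d=j-i+1$. Showing $\phi$ is a morphism $\mathcal{D}_1(A)\to\mathcal{D}_d(B)$ amounts to showing that for every affine $g:\{0,1\}^{d+1}\to A$ one has $\sum_{w\in\{0,1\}^{d+1}}(-1)^{h(w)}\phi(g(w))=0$ (any affine cube composed with a morphism into $\{0,1\}^{d+1}$ is again affine, so this one identity yields all the required $(d+1)$-fold differences). Writing $g(w)=g_0+\sum_l a_l w_l$, I would introduce $i-1$ auxiliary coordinates $u\in\{0,1\}^{i-1}$ and set $F:\{0,1\}^{(d+1)+(i-1)}\to A$, $F(w,u)=g_0+\big(\sum_l a_l w_l\big)\prod_{t=1}^{i-1}u_t$, so that $F(w,u)=g(w)$ when $u=1^{i-1}$ and $F(w,u)=g_0$ otherwise. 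Each monomial $w_l\prod_t u_t$ has degree $i$, so $F\in C^{j+1}(\mathcal{D}_i(A))$ since $(d+1)+(i-1)=j+1$. The morphism hypothesis then forces the top alternating sum of $\phi\circ F$ over $\{0,1\}^{j+1}$ to vanish; splitting that sum according to whether $u=1^{i-1}$, the terms with $u\ne 1^{i-1}$ carry the factor $\sum_w(-1)^{h(w)}=0$, and the surviving term equals $(-1)^{i-1}\sum_w(-1)^{h(w)}\phi(g(w))$. Hence the latter sum is $0$, which is exactly the identity I wanted.

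The routine parts are the multilinear-coefficient bookkeeping; the one genuinely load-bearing idea, and the step I expect to need the most care, is the construction of $F$: one must lift the affine cube $g$ to a degree-exactly-$i$ cube on $j+1$ coordinates so that it is still a legal cube of $\mathcal{D}_i(A)$ and its top alternating sum collapses back onto that of $\phi\circ g$. Multiplying only the \emph{linear part} of $g$ by the monomial $u_1\cdots u_{i-1}$, while leaving the constant term $g_0$ untouched, is what makes both requirements hold at once; getting the dimension count $(d+1)+(i-1)=j+1$ and the degree count to match is the crux of the argument.
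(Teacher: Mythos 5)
Your proposal is correct, but it takes a genuinely different route from the paper. The paper's entire proof is a one-line appeal to external machinery: the lemma ``follows directly by applying the operators $\partial_x$'' of \cite{NP} to the cube structures of $\mathcal{D}_i(A)$ and $\mathcal{D}_j(B)$ --- i.e.\ it reduces the statement to the known degree-shifting behaviour of those derivative operators, outsourcing all the work to that reference. You instead give a self-contained combinatorial argument: you identify $C^n(\mathcal{D}_k(A))$ with the multilinear polynomials of degree at most $k$ (proving the direction you actually need), and then you design explicit test cubes --- the cube $a+(a'-a)\prod_l v_l$ for the constancy claim, and the lift $F(w,u)=g_0+\bigl(\sum_l a_l w_l\bigr)\prod_t u_t$ in dimension $(d+1)+(i-1)=j+1$ for the degree-drop claim --- whose images under $\phi$ produce exactly the alternating-sum identities required. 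The construction of $F$ (multiplying only the linear part of $g$ by $u_1\cdots u_{i-1}$, leaving $g_0$ alone) is the genuinely new ingredient relative to the paper, and it works: the degree and dimension counts check out, and the split of the top alternating sum over $u$ collapses correctly onto $\sum_w(-1)^{h(w)}\phi(g(w))$. Your route buys a proof readable without \cite{NP}; the paper's buys brevity and a formalism that applies to morphisms of general nilspaces rather than only degree structures on abelian groups.

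One small repair is needed. Your preliminary claim that membership in $C^m(\mathcal{D}_k(B))$ forces the alternating sum over all of $\{0,1\}^m$ to vanish is true, but the justification ``(the identity morphism)'' is only literally valid when $m=k+1$, since the defining tests of $\mathcal{D}_k$ are morphisms with domain $\{0,1\}^{k+1}$. This matters only in your first part, where $m=i$, $k=j$ and possibly $i>j+1$. Fix it either by decomposing the full alternating sum as a signed combination of pullbacks along the $2^{i-j-1}$ face embeddings $v'\mapsto(v',v'')$ of $\{0,1\}^{j+1}$ into $\{0,1\}^i$ (each such pullback sum vanishes by membership), or more simply by running your construction in dimension $j+1$ instead of $i$: the cube $f(v)=a+(a'-a)\prod_{l=1}^{j+1}v_l$ lies in $C^{j+1}(\mathcal{D}_i(A))$ because $j+1\le i$, and in dimension $j+1$ the identity map is a legal test morphism. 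In your second part the dimension is exactly $j+1$, so that step is already fully justified.
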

This follows directly by applying the operators $\partial_x$ (defined in \cite{NP}) to the cub structures on $\mathcal{D}_i(A)$ and $\mathcal{D}_j(B)$.

It was shown in \cite{NP} that higher degree abelian groups are building blocks of every $k$-step nilspace.
To state the precise statement we will need the following formalism.
Let $A$ be an abelian group and $X$ be an arbitrary set. An $A$ bundle over $X$ is a set $B$ together with a free action of $A$ such that the orbits of $A$ are parametrized by the elements of $X$. This means that there is a projection map $\pi:B\rightarrow X$ such that every fibre is an $A$-orbit. 
The action of $a\in A$ on $x\in B$ is denoted by $x+a$. Note that if $x,y\in B$ are in the same $A$ orbit then it make sense to talk about the difference $x-y$ which is the unique element $a\in A$ with $y+a=x$. In other words the $A$ orbits can be regarded as affine copies of $A$.

A $k$-fold abelian bundle $X_k$ is a structure which is obtained from a one element set $X_0$ in $k$-steps in a way that in the $i$-th step we produce $X_i$ as an $A_i$ bundle over $X_{i-1}$.
The groups $A_i$ are the structure groups of the $k$-fold bundle. We call the spaces $X_i$ the $i$-th factors. 

\begin{definition}\label{bundef} Let $X_k$ be a $k$-fold abelian bundle with factors $\{X_i\}_{i=1}^k$ and structure groups $\{A_i\}_{i=1}^k$. Let $\pi_i$ denote the projection of $X_k$ to $X_i$.
Assume that $X_k$ admits a cubespace structure with cube sets $\{C^n(X_k)\}_{n=1}^\infty$.
We say that $X_k$ is a {\bf $k$-degree bundle} if it satisfies the following conditions
\begin{enumerate}
\item $X_{k-1}$ is a $k-1$ degree bundle.
\item Every function $f\in C^n(X_{k-1})$ can be lifted to $f'\in C^n(X_k)$ with $f'\circ\pi_{k-1}=f$.
\item If $f\in C^n(X_k)$ then the fibre of $\pi_{k-1}:C^n(X_k)\rightarrow C^n(X_{k-1})$ containing $f$ is  $$\{f+g|g\in C^n(\mathcal{D}_k(A_k))\}.$$ 
\end{enumerate} 
\end{definition}

The next theorem form \cite{NP} says that $k$-degree bundles are the same as $k$-step nilspaces.

\begin{theorem} Every $k$-degree bundle is a $k$-step nilspace and every $k$-step nilspace arises as a $k$-degree bundle. 
\end{theorem}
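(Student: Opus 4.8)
The plan is to prove both implications by a single induction on the number of steps $k$, the base case $k=0$ being the trivial identification of a one-point set with the $0$-fold bundle $X_0$. Throughout I will use the fact (provable from the gluing axiom, and available in \cite{NP}) that in any nilspace uniqueness of the gluing extension propagates upward: if extensions are unique at dimension $k+1$ then they are unique at every dimension $n\ge k+1$. This is what lets the two inductive hypotheses interact cleanly.

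\textbf{Bundle implies nilspace.} Assuming $X_{k-1}$ is a $(k-1)$-step nilspace, I verify the three axioms together with the uniqueness for $X_k$. Ergodicity is immediate: a prospective $1$-cube projects under $\pi_{k-1}$ to a $1$-cube of $X_{k-1}$ (every pair, by inductive ergodicity), every base cube lifts by condition (2), and any two elements of a fibre differ by an element of $A_k$; since $C^1(\mathcal{D}_k(A_k))=A_k^{\{0,1\}}$, adding a suitable constant degree-$k$ cube realizes the prescribed pair. For composition I would check closure on a generating set of cube morphisms (coordinate permutations, degeneracies and face inclusions, coordinate reflections): writing $f=f_0+g$ with $f_0$ a lift of its base cube and $g\in C^n(\mathcal{D}_k(A_k))$ via condition (3), one has $f\circ\psi=f_0\circ\psi+g\circ\psi$ with $g\circ\psi$ a degree-$k$ cube by composition in $\mathcal{D}_k(A_k)$, so it remains to see that $f_0\circ\psi$ is a cube, which for each generator follows because the lift can be transported along $\psi$ and $C^\bullet(\mathcal{D}_k(A_k))$ is closed under $\psi$. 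For gluing, given a partial cube on $\{0,1\}^n\setminus\{1^n\}$ agreeing with cubes on the faces through $0^n$, I push it down by $\pi_{k-1}$; the projected partial cube extends (uniquely for $n\ge k$ by the inductive hypothesis and upward propagation), I lift the extension by condition (2), and determine the $A_k$-coordinate of the remaining vertex by gluing inside $\mathcal{D}_k(A_k)$, which is legitimate since $\mathcal{D}_k(A_k)$ is a nilspace. At $n=k+1$ both the base extension and the fibre extension are unique, so $X_k$ is $k$-step.

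\textbf{Nilspace implies bundle.} Given a $k$-step nilspace $N$, I construct the top projection. Define $x\sim y$ on $N$ to mean that the function $\{0,1\}^{k}\to N$ equal to $y$ at $1^{k}$ and to $x$ elsewhere lies in $C^{k}(N)$. I would first prove $\sim$ is an equivalence relation — symmetry and transitivity by gluing the relevant spike-cubes and invoking composition — and that it is a congruence, so that the pointwise quotient $q$ carries cubes to cubes. Setting $N^{(k-1)}=N/\sim$ with $C^n(N^{(k-1)})=\{q\circ f:f\in C^n(N)\}$, I check $N^{(k-1)}$ is a $(k-1)$-step nilspace, whence by induction it is a $(k-1)$-degree bundle with factors $X_0,\dots,X_{k-1}$ and structure groups $A_1,\dots,A_{k-1}$. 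On a fixed fibre $F=q^{-1}(\ast)$, the $k$-step uniqueness lets me define, for $x,y,z\in F$, the element $x-y+z$ as the unique completion of a corner configuration; I would show this parallelogram operation makes $F$ an affine copy of an abelian group $A_k$ acting freely and transitively, commutativity and associativity being cube identities forced by uniqueness, and that $A_k$ is independent of the fibre. Finally I verify condition (3): two cubes of $N$ over the same base cube differ fibrewise by an element of $C^n(\mathcal{D}_k(A_k))$ and conversely, so the fibres of $\pi_{k-1}\colon C^n(N)\to C^n(N^{(k-1)})$ are exactly the claimed torsors; condition (2) is the special case, via gluing, that every base cube lifts.

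\textbf{The main obstacle.} All the substance is in the second direction: showing that the spike relation $\sim$ is a genuine congruence whose quotient is again a nilspace, and then extracting from the bare uniqueness axiom an honest abelian group acting freely and transitively on the fibres whose degree-$k$ cube structure coincides with $\mathcal{D}_k(A_k)$. Both rest on repeatedly invoking uniqueness of gluing at dimension $k+1$, together with its upward propagation, to make the cube-completions well defined and to force the algebraic identities; this is where essentially all the work lies, the first direction being a verification.
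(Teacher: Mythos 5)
A point of order first: the paper does not prove this statement at all. It is quoted as an external result from \cite{NP} (Camarena--Szegedy), introduced with ``The next theorem form \cite{NP} says\dots'', so there is no in-paper proof to compare yours against; the relevant benchmark is the proof in \cite{NP}, whose overall architecture (induction on $k$, the spike relation $\sim_{k-1}$, the quotient nilspace, the fibre group extracted from unique completion) your outline does reproduce correctly.

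Judged as a proof, however, your proposal has genuine gaps, and you concede as much in your final paragraph: every step that constitutes the actual content of the theorem is introduced with ``I would show'' and never carried out. Concretely: (i) that the spike relation is an equivalence relation and a congruence, and that the quotient cubespace, with cubes defined as images of cubes, again satisfies gluing and has unique completion at dimension $k$ --- this needs a real lifting argument (completions upstairs must be produced over prescribed completions downstairs), not just the composition axiom; (ii) that the parallelogram operation $x-y+z$ on a fibre is well defined, associative, commutative, independent of the choice of fibre, and yields a free transitive action of a single group $A_k$ --- in \cite{NP} this takes several pages of carefully chosen $(k+1)$-dimensional configurations exploited through the uniqueness axiom; (iii) that two cubes of $N$ with the same projection differ fibre-wise by an element of $C^n(\mathcal{D}_k(A_k))$, which is exactly condition (3) of definition \ref{bundef} and is arguably the hardest point; you state it but give no argument. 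Naming these claims is not proving them. A smaller remark on the easy direction: the composition axiom need not be verified at all, since definition \ref{bundef} already assumes that $X_k$ admits a cubespace structure; your attempted derivation of it from conditions (2)--(3) (``the lift can be transported along $\psi$'') is not justified as written, but it is also unnecessary. Your ergodicity and gluing verifications in that direction are correct in outline, including the use of upward propagation of uniqueness, which is itself a lemma from \cite{NP} that you invoke rather than prove.
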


We say that a $k$-step nilspace is of exponent $q$ if all the structure groups $A_i$ are of exponent $q$.

\begin{definition} Let $x,y$ be two elements in the nilspace $N$. We say that $x\sim_i y$ if the map $c:\{0,1\}^{i+1}\rightarrow N$ with $c(0^{i+1})=x$ and $c(v)=y$ if $v\neq 0^{i+1}$ is an element in $C^{i+1}(N)$.
It was proved in \cite{NP} that $\sim_i$ is an equivalence relation and the classes form a factor of $N$ denoted by $\mathcal{F}_i(N)$. The factor $\mathcal{F}_i$ coincides with the factor $X_i$ in definition \ref{bundef}.
\end{definition}

We introduce special morphisms between $k$-step nilspaces which have very strong surjectivity properties and behave consistently with respect to the equivalence classes $\sim_i$.

\begin{definition} Let $N$ and $M$ be $k$-step nilspaces. A morphism $\phi:N\rightarrow M$ is called a factor map if 
for every $1\leq i\leq k$ the image of every $\sim_i$ class in $N$ is a $\sim_i$ class in $M$.
\end{definition}

Notice that every morphism maps a $\sim_i$ class into a $\sim_i$ class. In the above definition we require the surjectivity of these local maps.
These type of maps were also investigated in \cite{NP}.
If $N$ and $M$ are compact nilspaces then factor maps are measure preserving which is useful.
In this paper we will use this notion to obtain finite nilspaces from free nilspaces as factors.

\subsection{Extensions}

\begin{definition}\label{kdegext} Let $N$ be an arbitrary nilspace. A degree $k$-extension of $N$ is an abelian bundle $M$ over $N$ which is a cube space with the following properties.
\begin{enumerate}
\item For every $n\in\mathbb{N}$ and $c\in C^n(N)$ there is $c'\in C^n(N)$ such that $\pi(c')=c$,
\item If $c_1\in C^n(M)$ and $c_2:\{0,1\}^n\rightarrow M$ with $\pi(c_1)=\pi(c_2)$ then $c_2\in C^n(M)$ if and only if $c_1-c_2\in C^n(\mathcal{D}_k(A))$.
\end{enumerate}
The map $\pi$ is the projection from $M$ to $N$.
The extension $M$ is called a split extension if there is a cube preserving morphism $m:N\rightarrow M$ such that $m\circ\pi$ is the identity map of $N$.
\end{definition}

\begin{lemma} Let $N$ be a $k$-step nilspace. If $M$ is a degree $i$ extension of $N$ by an abelian group $A$ then $\mathcal{F}_i(M)$ is a degree $i$ extension of $\mathcal{F}_i(N)$ by $A$. Furthermore the projection $\tau:\mathcal{F}_i(M)\rightarrow\mathcal{F}_i(N)$ is the composition of the projection $\pi:M\rightarrow N$ by the projection $\pi_i:N\rightarrow\mathcal{F}_i(N)$. 
\end{lemma}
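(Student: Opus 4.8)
The plan is to build $\tau$ by functoriality of the operation $\mathcal{F}_i$ and then verify the two axioms of Definition \ref{kdegext} for $\mathcal{F}_i(M)$ over $\mathcal{F}_i(N)$. Since every morphism sends $\sim_i$-classes into $\sim_i$-classes, the projection $\pi:M\to N$ carries $\sim_i$-classes of $M$ into $\sim_i$-classes of $N$, so $\pi_i\circ\pi:M\to\mathcal{F}_i(N)$ is constant on $\sim_i$-classes of $M$ and descends to $\tau:\mathcal{F}_i(M)\to\mathcal{F}_i(N)$ with $\tau\circ\pi_i^M=\pi_i\circ\pi$; this is exactly the asserted factorization. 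The $A$-action on $M$ is by translations, which are cube preserving (a translate of a cube differs from it by a constant, hence by an element of $C^n(\mathcal{D}_i(A))$, so it stays a cube by axiom 2 for $M$), and one checks $x\sim_i y\Rightarrow x+a\sim_i y+a$; thus the action descends to $\mathcal{F}_i(M)$, commutes with $\pi_i^M$, and preserves the fibres of $\tau$.

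Next I would show $\mathcal{F}_i(M)$ is an $A$-bundle over $\mathcal{F}_i(N)$. A map on $\{0,1\}^{i+1}$ lies in $C^{i+1}(\mathcal{D}_i(A))$ iff its unique top-order alternating sum $\sum_v(-1)^{h(v)}(\cdot)$ vanishes. For freeness, comparing the corner cube $c$ with $c(0^{i+1})=x$, $c(v)=x+a$ otherwise, to the constant cube $\equiv x$ gives difference with top sum $-a$; hence $x\sim_i x+a$ iff $a=0$. For fibres $=$ orbits, if $\pi(x)\sim_i\pi(y)$ I lift the corner cube over $\pi(x),\pi(y)$ to some $c'\in C^{i+1}(M)$ normalised so that $c'(0^{i+1})=x$ and $c'(v)=y+a_v$, and then $a:=-\sum_{v\neq 0^{i+1}}(-1)^{h(v)}a_v$ kills the single top-degree obstruction, so $x\sim_i y+a$. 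With surjectivity of $\tau$ (from that of $\pi$) this gives the bundle. The first extension axiom then follows by lifting a cube of $\mathcal{F}_i(N)$ first to $N$ (cube lifting of the factor $\pi_i$, from \cite{NP}) and then to $M$ (axiom 1 for $M$), and projecting by $\pi_i^M$.

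The main work is the second extension axiom: for $\bar c\in C^n(\mathcal{F}_i(M))$ and $A$-valued $g$, that $\bar c+g\in C^n(\mathcal{F}_i(M))$ iff $g\in C^n(\mathcal{D}_i(A))$. The "if" direction is routine: lift $\bar c$ to $c\in C^n(M)$, form $c+g$ (same $N$-projection as $c$), apply axiom 2 for $M$, and project. The converse is the step I expect to be the obstacle, since naive lifting runs into circularity. My plan is to break this using lower factors. Because $C^n(\mathcal{D}_i(A))=A^{\{0,1\}^n}$ for $n\le i$, axiom 2 for $M$ is vacuous below dimension $i+1$, which forces $x\sim_{i-1}y$ in $M$ exactly when $\pi(x)\sim_{i-1}\pi(y)$ in $N$; hence $\mathcal{F}_{i-1}(M)=\mathcal{F}_{i-1}(N)$ and $\pi_{i-1}^M$ factors through $\pi$.

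Finally I would invoke the structure theory. As $\mathcal{F}_i(M)$ is an $i$-step nilspace, Definition \ref{bundef} makes it a degree-$i$ extension of $\mathcal{F}_{i-1}(\mathcal{F}_i(M))=\mathcal{F}_{i-1}(M)=\mathcal{F}_{i-1}(N)$ by its top structure group $G$, with the fibres of $C^n(\mathcal{F}_i(M))\to C^n(\mathcal{F}_{i-1}(N))$ being cosets of $C^n(\mathcal{D}_i(G))$. The descended $A$-action, being vertical for $\pi$ and hence for $\pi_{i-1}^M$, is free on these $G$-fibres, so $A\hookrightarrow G$. Thus if $\bar c$ and $\bar c+g$ are both cubes with $g$ $A$-valued, they have the same $\mathcal{F}_{i-1}(N)$-projection, so their difference $g$ lies in $C^n(\mathcal{D}_i(G))$; since $g$ takes values in $A$, the defining alternating sums lie in $A$ and vanish in $G$, hence vanish in $A$, giving $g\in C^n(\mathcal{D}_i(A))$. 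This closes axiom 2. The one structural input to keep in view is that $M$ is itself a nilspace, so that $\mathcal{F}_i(M)$ and the cited factor properties of \cite{NP} apply; this holds because a degree-$i$ extension of a $k$-step nilspace is again a nilspace.
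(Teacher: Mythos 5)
Most of your argument is sound, and it is considerably more detailed than the paper's own proof: the construction of $\tau$, the descent and freeness of the $A$-action (your corner-cube computation is exactly the paper's observation that $\sim_i$ separates the points of a fibre, whose structure is $\mathcal{D}_i(A)$), the identification of $\tau$-fibres with $A$-orbits, the cube-lifting axiom, the ``if'' direction of the second extension axiom, and the identity $\mathcal{F}_{i-1}(M)=\mathcal{F}_{i-1}(N)$ are all correct. The gap is in the last step, where you write that the descended $A$-action ``is free on these $G$-fibres, so $A\hookrightarrow G$''. Freeness of an $A$-action on a $G$-torsor does not give a group embedding of $A$ into $G$, and it certainly does not give what your argument actually uses afterwards: that translation by $a\in A$ equals translation by $\theta(a)\in G$ for a fixed injective homomorphism $\theta:A\rightarrow G$ independent of the point. (For instance $\mathbb{Z}/4\mathbb{Z}$ acts freely and transitively on any four-element set, in particular on a $(\mathbb{Z}/2\mathbb{Z})^2$-torsor, yet does not embed in $(\mathbb{Z}/2\mathbb{Z})^2$.) Without such a $\theta$ the sentence ``their difference $g$ lies in $C^n(\mathcal{D}_i(G))$; since $g$ takes values in $A$, the alternating sums lie in $A$ and vanish in $G$'' does not parse: the $A$-valued difference and the $G$-valued difference of the two cubes are a priori unrelated objects, because $A$ is not a subgroup of $G$.

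The missing ingredient exists and is where I would send you: each $a\in A$ acts on $M$ as a translation of height $i$ in the sense of Definition \ref{transdef} (for a face $F$ of codimension $i$, the function equal to $a$ on $F$ and $0$ elsewhere lies in $C^n(\mathcal{D}_i(A))$, so the second axiom of Definition \ref{kdegext} gives $a^F(c)\in C^n(M)$); translations descend to the canonical factors, so $A$ acts on $\mathcal{F}_i(M)$ by height-$i$ translations; and by the structure theory of \cite{NP}, the height-$i$ translations of an $i$-step nilspace are exactly the translations by its top structure group $G$ acting on the fibres over $\mathcal{F}_{i-1}$. This produces the homomorphism $\theta:A\rightarrow G$, injective by your freeness computation, and then your alternating-sum argument does close the second axiom. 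It is worth noting that the paper sidesteps all of this: it quotes from \cite{NP} that $\mathcal{F}_i(M)$ is automatically \emph{some} degree-$i$ extension of $\mathcal{F}_i(N)$, and only verifies that the structure group does not collapse, which is precisely your freeness step. So your route is more self-contained, but as written it is incomplete at exactly the point where the translation-group theory must be invoked.
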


\begin{proof} First of all observe that $\mathcal{F}_i(M)$ is some $i$-degree extension of $\mathcal{F}_i(N)$. All we need to show is that the structure group of this extension is $A$. In other words $A$ does not collapse when we look at the situation modulo $\sim_i$. To see this we show that if $x,y$ are in the same fibre of $\pi$ and $x\neq y$ then $x$ is different from $y$ mod $\sim_i$.
Let $F$ be the fibre containing $x$ and $y$. Since the structure of $M$ restricted to $F$ is $\mathcal{D}_i(A)$ and $\sim_i$ separates every element in $\mathcal{D}_i(A)$ the proof is complete. 
\end{proof}

\begin{lemma}\label{subdirect} Let $N$ be a $k$-step nilspace. If $M$ is a degree $i$ extension of $N$ by an abelian group $A$ (with projection $\pi$) then $M$ is a sub direct product of $N$ with a degree $i$ extension $K$ of $\mathcal{F}_i(N)$ by $A$. The sub direct product is the set of pairs $(a,b)$ such that $\pi_i(a)=\tau(b)$ such that $\pi_i:N\rightarrow\mathcal{F}_i(N)$ and $\tau:K\rightarrow\mathcal{F}_i(N)$ are the projection maps.
\end{lemma}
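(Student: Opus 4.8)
The plan is to construct an explicit cubespace isomorphism between $M$ and the claimed subdirect product. Write $K=\mathcal{F}_i(M)$; by the previous lemma $K$ is a degree $i$ extension of $\mathcal{F}_i(N)$ by the same group $A$, with projection $\tau:K\to\mathcal{F}_i(N)$ satisfying $\tau\circ\rho=\pi_i\circ\pi$, where $\rho:M\to K$ and $\pi_i:N\to\mathcal{F}_i(N)$ are the factor projections and $\pi:M\to N$ is the extension projection. I would define $\Phi:M\to N\times K$ by $\Phi(m)=(\pi(m),\rho(m))$. The commuting square immediately gives $\pi_i(\pi(m))=\tau(\rho(m))$, so $\Phi$ lands in the set $P=\{(a,b):\pi_i(a)=\tau(b)\}$, which carries the subdirect product cubespace structure inherited from $N\times K$. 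The whole lemma then reduces to showing that $\Phi$ is a bijection onto $P$ and that it identifies the cube sets.

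For bijectivity, injectivity comes from the mechanism already used in the previous lemma: if $\Phi(m_1)=\Phi(m_2)$ then $m_1,m_2$ lie in a common fibre of $\pi$, on which $M$ has the structure $\mathcal{D}_i(A)$, and since $\sim_i$ separates the points of $\mathcal{D}_i(A)$ the equality $\rho(m_1)=\rho(m_2)$ forces $m_1=m_2$. For surjectivity I would take $(a,b)\in P$, choose (by surjectivity of the bundle projection) some $m_0$ with $\pi(m_0)=a$, observe $\tau(\rho(m_0))=\pi_i(a)=\tau(b)$ so that $\rho(m_0)$ and $b$ lie in one fibre of $\tau$, write $b=\rho(m_0)+a'$ with $a'\in A$, and put $m=m_0+a'$. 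Using that $\rho$ is $A$-equivariant (the $A$-action on $M$ along $\pi$-fibres descends to the structure-group action on $K$, which is essentially the content of the previous lemma) one gets $\pi(m)=a$ and $\rho(m)=b$, i.e.\ $\Phi(m)=(a,b)$.

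The cube structure is the heart of the argument. One inclusion is routine: if $c\in C^n(M)$ then $\pi\circ c\in C^n(N)$ and $\rho\circ c\in C^n(K)$ because $\pi$ and $\rho$ are morphisms, so $\Phi\circ c$ is a cube of the subdirect product. The content is the converse. Given $c:\{0,1\}^n\to M$ with $\pi\circ c\in C^n(N)$ and $\rho\circ c\in C^n(K)$, I would first use axiom (1) of Definition \ref{kdegext} to lift $\pi\circ c$ to some $c'\in C^n(M)$ with $\pi\circ c'=\pi\circ c$. Since $c$ and $c'$ agree after $\pi$, they differ fibrewise by a function $g:\{0,1\}^n\to A$, and by axiom (2) of the extension $M\to N$ it suffices to prove $g\in C^n(\mathcal{D}_i(A))$. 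This is where the second hypothesis enters: by $A$-equivariance of $\rho$ the same $g$ is the fibrewise difference of $\rho\circ c$ and $\rho\circ c'$ inside $K$; both of these are cubes of $K$ with equal $\tau$-projection (namely $\pi_i\circ\pi\circ c$), so axiom (2) of the extension $K\to\mathcal{F}_i(N)$ yields $g\in C^n(\mathcal{D}_i(A))$, as required.

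I expect the main obstacle to be the bookkeeping around the $A$-equivariance of $\rho$ and the compatibility of the two extension structures, precisely the claim that the single $A$-valued cochain $g$ simultaneously controls the difference of $c,c'$ in $M$ and of $\rho\circ c,\rho\circ c'$ in $K$. This rests on identifying the structure-group action on $K=\mathcal{F}_i(M)$ with the $\pi$-fibre action on $M$, which is available from the previous lemma but should be stated carefully before it is invoked. Everything else is a clean application of the two defining axioms of a degree $i$ extension, used once for $M$ over $N$ and once for $K$ over $\mathcal{F}_i(N)$.
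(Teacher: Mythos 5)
Your proposal follows exactly the same route as the paper: take $K=\mathcal{F}_i(M)$ and show that the map $m\mapsto(\pi(m),\rho(m))$ identifies $M$ with the stated subdirect product of $N$ and $K$, invoking the previous lemma. The paper's own proof is just this construction stated in three sentences with the bijectivity and cube-set verification left implicit; your write-up correctly supplies those details (including the $A$-equivariance of the factor map $\rho$ and the double use of axiom (2) of the extension definition), so it is a fleshed-out version of the same argument rather than a different one.
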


\begin{proof} Let $K=\mathcal{F}_i(M)$ and $\tau:M\rightarrow\mathcal{F}_i(M)$ be the projection. Let $\phi=\pi\times\tau$ be the morphism from $M$ to $N\times K$. According to the previous lemma the image of $\phi$ gives an isomorphism between $M$ and the subdirect product of $N$ and $K$ defined in the lemma.
\end{proof}

\subsection{Translation groups}

For an arbitrary subset $F$ in $\{0,1\}^n$ and map $\alpha:N\rightarrow N$ we define the map $\alpha^F$ from $C^n(N)$ to $N^{\{0,1\}^n}$ such that $\alpha^F(c)(v)=\alpha(c(v))$ if $v\in F$ and $\alpha^F(c)(v)=c(v)$ if $v\notin F$.

\begin{definition}\label{transdef} Let $N$ be a nilspace. A map $\alpha:N\rightarrow N$ is called a translation of hight $i$
if for every natural number $n\geq i$, $n-i$ dimensional face $F\subseteq\{0,1\}^n$ and $c\in C^n(N)$ the map $\alpha^F(c)$ is in $C^n(N)$. We denote the set of hight $i$ translations by $\aff_i(N)$. We will use the short hand notation $\aff(N)$ for $\aff_1(N)$.
\end{definition}

It is not hard to see that if $N$ is a $k$-step nilspace then $\aff(N)$ is a $k$-nilpotent group and $\{\aff_i(N)\}_{i=1}^k$ is a central series in $\aff(N)$.
In this chapter we are interested in the following question.

\medskip

\noindent{\it Let $N$ be an $k$-degree extension of a $k-1$ step nilspace $M$ and let $\alpha\in\aff_i(M)$. Under what circumstances can we lift $\alpha$ to an element $\alpha'\in\aff_i(N)$ such that $\pi(\alpha'(n))=\alpha(\pi(n))$ for every $n\in N$ ($\pi:N\rightarrow M$ is the projection.)?} 

\medskip

We will need the definition of the arrow spaces.
Let $f_1,f_2:\{0,1\}^n\rightarrow N$ be two maps. We denote by $(f_1,f_2)_i$ the map $g:\{0,1\}^{n+i}\rightarrow N$ such that $g(v,w)=f_1(v)$ if $w\in\{0,1\}^i\setminus\{1^i\}$ and $g(v,w)=f_2(v)$ if $w=1^i$. If $f:\{0,1\}^n\rightarrow N\times N$ is a single map with components $f_1,f_2$ then we denote by $(f)_i$ the map $(f_1,f_2)_i$.
A map $f:\{0,1\}^n\rightarrow N\times N$ is a cube in the $i$-th arrow space if $(f)_i$ is a cube in $N$.

Let $\mathcal{T}=\mathcal{T}(\alpha,N,i)$ be the set of pairs $(x,y)\in N^2$ where $\alpha(\pi_{k-1}(x))=\pi_{k-1}(y)$. We interpret $\mathcal{T}$ as a subset of the $i$-th arrow space over $N$.
It is easy to see that if $k\geq i+1$ then $\mathcal{T}$ is an ergodic nilspace with the inherited cubic structure.
We define $\mathcal{T}^*$ as $\mathcal{F}_{k-1}(\mathcal{T})$.
We have \cite{NP} that $\mathcal{T}^*$ is a degree $k-i$ extension of $\mathcal{F}_{k-1}(N)$ by $A_k$.
The next theorem was proved in \cite{NP}

\begin{proposition}\label{transext} Let $N$ be a $k$-step nilspace and $\alpha\in\aff_i(\mathcal{F}_{k-1}(N))$. If $\mathcal{T}^*=\mathcal{T}^*(\alpha,N,i)$ is a split extension then $\alpha$ lifts to an element $\beta\in\aff_i(N)$.
\end{proposition}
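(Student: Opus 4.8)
My plan is to reformulate the lifting problem as the existence of a \emph{section} of a projection and then build that section out of the given splitting. The construction of $\mathcal{T}$ is tailored exactly so that a height-$i$ lift of $\alpha$ is the same datum as a cube morphism $s\colon N\to\mathcal{T}$ splitting the first-coordinate projection $p_1\colon\mathcal{T}\to N$, $(x,y)\mapsto x$. Indeed, unwinding Definition \ref{transdef} of $\aff_i(N)$ against the definition of the $i$-th arrow space, a map $\beta\colon N\to N$ with $\pi_{k-1}\circ\beta=\alpha\circ\pi_{k-1}$ lies in $\aff_i(N)$ precisely when, for every cube $c\in C^n(N)$, the pair $(c,\beta\circ c)$ is a cube of $\mathcal{T}$, i.e.\ when $x\mapsto(x,\beta(x))$ is a morphism $N\to\mathcal{T}$ landing in the fibre of $p_1$ over $x$. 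The only point requiring care here is that the arrow-space condition, which fixes one distinguished face $\{w=1^i\}$, recovers the full translation condition over \emph{all} $(n-i)$-dimensional faces $F$; this I would obtain from the composition axiom, reflecting coordinates by the cube automorphisms $v_j\mapsto 1-v_j$ to reduce a general $F$ to the standard one. So the whole task becomes: produce a morphism section of $p_1$.

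The second step is to identify $\mathcal{T}$ with a sub-direct product, in the spirit of Lemma \ref{subdirect}, but over the factor $\mathcal{F}_{k-1}(N)$. Viewing $\mathcal{T}$ as an abelian bundle over $N$ via $p_1$, two lifts of a fixed cube $c\in C^n(N)$ differ by a map $h\colon\{0,1\}^n\to A_k$ supported, inside the $(n+i)$-cube $(c,\cdot)_i$, on the codimension-$i$ face $\{w=1^i\}$; the degree-$k$ extension structure of $N$ over $\mathcal{F}_{k-1}(N)$ forces this difference to lie in $C^n(\mathcal{D}_{k-i}(A_k))$, so $p_1\colon\mathcal{T}\to N$ is a degree $(k-i)$ extension of $N$ by $A_k$ (the arrow of length $i$ drops the degree by $i$, matching the stated degree of $\mathcal{T}^*$). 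Since $k-i\le k-1$, the relation $\sim_{k-1}$ separates every point of $\mathcal{D}_{k-i}(A_k)$, hence the factor map $\rho\colon\mathcal{T}\to\mathcal{T}^*=\mathcal{F}_{k-1}(\mathcal{T})$ is injective on each $p_1$-fibre. Consequently $(p_1,\rho)$ embeds $\mathcal{T}$ isomorphically onto the sub-direct product
\[ \mathcal{T}\;\cong\;\{(x,t)\in N\times\mathcal{T}^* : \pi_{k-1}(x)=\tau(t)\}, \]
where $\tau\colon\mathcal{T}^*\to\mathcal{F}_{k-1}(N)$ is the projection. This identification is the heart of the argument and the step I expect to be the main obstacle: one must confirm both that the structure group $A_k$ survives undamaged under $\sim_{k-1}$ (so no fibre collapses) and that $(p_1,\rho)$ is a morphism in both directions, so that a section of $\tau$ pulls back to a section of $p_1$.

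With the identification in hand, the splitting finishes the proof cheaply. By hypothesis $\mathcal{T}^*$ is a split extension, so there is a cube-preserving morphism $m\colon\mathcal{F}_{k-1}(N)\to\mathcal{T}^*$ with $\tau\circ m=\mathrm{id}$. I would then define
\[ s(x)\;=\;\bigl(x,\,m(\pi_{k-1}(x))\bigr). \]
Because $\tau(m(\pi_{k-1}(x)))=\pi_{k-1}(x)$, the pair $s(x)$ satisfies the compatibility condition, so $s$ maps $N$ into $\mathcal{T}$; and $s$ is a morphism since its two components, $\mathrm{id}_N$ and $m\circ\pi_{k-1}$ (a composite of the factor morphism $\pi_{k-1}$ with the morphism $m$), are morphisms into the sub-direct product. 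By construction $p_1\circ s=\mathrm{id}_N$, so $s$ is exactly the morphism section sought in Step~1. Reading off the second $N$-coordinate of $s(x)$ under $\mathcal{T}\subseteq N\times N$ defines $\beta\colon N\to N$ with $\pi_{k-1}\circ\beta=\alpha\circ\pi_{k-1}$, and by the reformulation $\beta\in\aff_i(N)$ is the desired lift of $\alpha$.
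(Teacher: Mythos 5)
You should know at the outset that the paper contains no proof of Proposition \ref{transext} to compare against: it is introduced with ``The next theorem was proved in \cite{NP}'' and simply cited. Judged on its own, your architecture is sound and is essentially the argument of \cite{NP}: (1) a height-$i$ lift of $\alpha$ is the same datum as a morphism section of the first-coordinate projection $p_1\colon\mathcal{T}\to N$; (2) $(p_1,\rho)$, with $\rho\colon\mathcal{T}\to\mathcal{F}_{k-1}(\mathcal{T})=\mathcal{T}^*$ the canonical factor map, identifies $\mathcal{T}$ with the subdirect product of $N$ and $\mathcal{T}^*$ over $\mathcal{F}_{k-1}(N)$; (3) a splitting $m$ of $\tau$ then yields the section $x\mapsto(x,m(\pi_{k-1}(x)))$. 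The inputs you rely on ($\mathcal{T}$ is a nilspace for $k\ge i+1$; $\mathcal{T}^*$ is a degree $k-i$ extension of $\mathcal{F}_{k-1}(N)$ by $A_k$; $\sim_{k-1}$ separates points of $\mathcal{D}_{k-i}(A_k)$ since $k-i\le k-1$) are all stated in the text or imported from \cite{NP}. Your step (2), which you rightly flag as the main obstacle and only sketch, can be completed by the same device as Lemma \ref{subdirect}: given a compatible pair of cubes $(c_1,t)$, lift $t$ along the canonical factor map $\rho$ to some $\tilde{c}\in C^n(\mathcal{T})$, note $g:=c_1-p_1\tilde{c}\in C^n(\mathcal{D}_k(A_k))$ by the degree-$k$ bundle property of $N$, and correct $\tilde{c}$ by the diagonal shift $(g,g)$, whose contribution to the arrow cube is the pullback of $g$ and hence harmless.

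The one step whose written justification would not survive scrutiny is the backward direction of your step (1). You claim the passage from the arrow-space condition (one distinguished face) to the full translation condition (all codimension-$i$ faces) follows by reflecting coordinates. Reflections and permutations only move the face around; they do not address the actual discrepancy, which is that for $c\in C^n(N)$ the arrow cube $(c,\beta\circ c)_i$ is \emph{constant} in the $i$ arrow directions off the face $\{w=1^i\}$, whereas Definition \ref{transdef} requires modifying an \emph{arbitrary} cube $c'\in C^m(N)$ on a codimension-$i$ face, and off that face $c'$ genuinely depends on all coordinates. The repair is still a one-line use of the composition axiom, but a different instance of it than the one you name: apply the morphism property of $s=(\mathrm{id},\beta)$ to the full cube $c'$ itself, obtaining $(c',\beta\circ c')_i\in C^{m+i}(N)$, and then compose with the diagonal cube morphism $D\colon\{0,1\}^m\to\{0,1\}^{m+i}$ given by $D(v,w)=(v,w,w)$, where $w$ denotes the last $i$ coordinates (this is a legitimate cube morphism, since repeating coordinates extends to an affine map $\mathbb{Z}^m\to\mathbb{Z}^{m+i}$). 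The composite is exactly $\beta^F(c')$ for the standard face $F=\{w=1^i\}$, and general faces then do follow by the symmetries you mention. With this substitution, your proof is complete.
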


\subsection{Free nilspaces and their extensions}

\begin{definition} Let $a_1,a_2,\dots,a_k$ be a sequence of natural numbers. We denote by $F(a_1,a_2,\dots,a_k)$ the nilspace 
$$\prod_{i=1}^k\mathcal{D}_i(\mathbb{Z}^{a_i}).$$
We say that $F(a_1,a_2,\dots,a_k)$ is the free nilspace of rank $(a_1,a_2,\dots,a_k)$.
\end{definition}

\begin{lemma}\label{cubcub} Let $N$ be a nilspace. If $c\in C^n(\mathcal{D}_i(\mathbb{Z}))$, $f\in C^n(N)$ and $\alpha\in\aff_i(N)$.
Then $f^c$ defined by $v\rightarrow \alpha^{c(v)}(f(v))$ (where $v\in\{0,1\}^n$) is in $C^n(N)$. 
\end{lemma}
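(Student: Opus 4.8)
The plan is to reduce the statement to a monotonicity property of the groups $\aff_i(N)$ and then exploit the explicit polynomial description of cubes in $\mathcal{D}_i(\mathbb{Z})$. First I would record that $c\in C^n(\mathcal{D}_i(\mathbb{Z}))$ is exactly a map $\{0,1\}^n\to\mathbb{Z}$ whose $(i+1)$-st alternating sums vanish, which is the standard description of the degree structure and forces $c$ to be a polynomial of degree at most $i$ in the coordinates. Since $v_j^2=v_j$ on $\{0,1\}$, this lets me write $c=\sum_{|S|\le i}a_S\chi_S$ with $a_S\in\mathbb{Z}$, where $\chi_S(v)=\prod_{j\in S}v_j$ is the indicator of the face $F_S=\{v:v_j=1\ \forall j\in S\}$, a face of codimension $|S|\le i$.

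Because all powers of $\alpha$ commute, $f^c(v)=\alpha^{c(v)}(f(v))$ factors as the iterated application, over the subsets $S$, of the operator $(\alpha^{a_S})^{F_S}$ of Definition \ref{transdef} applied to $f$; indeed $(\alpha^{a_S})^{F_S}$ replaces $g(v)$ by $\alpha^{a_S}(g(v))$ exactly when $\chi_S(v)=1$. Since $\aff_i(N)$ is a group, each $\alpha^{a_S}$ again lies in $\aff_i(N)$. Hence it suffices to prove the following claim and iterate: if $\beta\in\aff_i(N)$, if $F$ is a face of codimension $j\le i$, and if $g\in C^n(N)$, then $\beta^F(g)\in C^n(N)$. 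The definition only supplies this for $j=i$, so the real content is to push it to all $j\le i$.

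To prove the claim I would lift the codimension up into the exact range handled by the definition. Set $m=i-j$ and let $p:\{0,1\}^{n+m}\to\{0,1\}^n$ be the projection onto the first $n$ coordinates and $\iota:\{0,1\}^n\to\{0,1\}^{n+m}$, $v\mapsto(v,1^m)$; both extend to affine maps and are therefore cube morphisms. Then $\tilde g=g\circ p\in C^{n+m}(N)$ by the composition axiom. Writing $F=\{v:v_l=\epsilon_l,\ l\in L\}$ with $|L|=j$, let $F'\subseteq\{0,1\}^{n+m}$ be the codimension-$i$ face obtained by additionally fixing the last $m$ coordinates to $1$ (note $n+m\ge i$ since $j\le n$); as $\beta\in\aff_i(N)$ this gives $\beta^{F'}(\tilde g)\in C^{n+m}(N)$. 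A direct check shows $(v,1^m)\in F'$ precisely when $v\in F$, so composing with $\iota$ yields $\beta^{F'}(\tilde g)\circ\iota=\beta^F(g)$, which lies in $C^n(N)$ again by the composition axiom. This establishes the claim.

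Finally I would assemble the pieces. Fixing an enumeration $S_1,\dots,S_r$ of the subsets of size at most $i$, set $g_0=f$ and $g_t=(\alpha^{a_{S_t}})^{F_{S_t}}(g_{t-1})$; an easy induction gives $g_t(v)=\alpha^{\sum_{s\le t}a_{S_s}\chi_{S_s}(v)}(f(v))$, so $g_r=f^c$, and each step preserves membership in $C^n(N)$ by the claim. Therefore $f^c\in C^n(N)$. The only genuinely delicate point is the codimension-lifting step, where one trades a large face in dimension $n$ for a codimension-$i$ face in dimension $n+m$; everything else is bookkeeping with the polynomial expansion of $c$ and the group structure of $\aff_i(N)$.
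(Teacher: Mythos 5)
Your proof is correct, and its overall skeleton matches the paper's: decompose $c$ into integer multiples of face indicators and absorb them one at a time using the height-$i$ translation property. The difference lies in the decomposition and in what must be proved about it. The paper decomposes $c$ into ``simple cubes'' that are constant on a face of codimension \emph{exactly} $i$ and zero elsewhere, so the definition of $\aff_i(N)$ applies verbatim at each step; the price is an unproved generation claim (one can verify it by repeatedly splitting a monomial $\chi_S$ with $|S|<i$ as $\chi_{S\cup\{j\}}+\chi_S\cdot(1-v_j)$ for $j\notin S$ until codimension $i$ is reached). You instead use the explicit multilinear expansion $c=\sum_{|S|\le i}a_S\chi_S$, whose terms live on faces of codimension $\le i$, and therefore need the additional claim that $\beta\in\aff_i(N)$ preserves cubes when applied on a face of codimension $j<i$; your dimension-lifting argument (pass to $g\circ p$ on $\{0,1\}^{n+i-j}$, act on the codimension-$i$ face $F\times\{1^{i-j}\}$, restrict along $v\mapsto(v,1^{i-j})$) proves this cleanly from the composition axiom. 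What your route buys: the decomposition step is fully explicit rather than asserted; the auxiliary claim is exactly the filtration property $\aff_i(N)\subseteq\aff_j(N)$ for $j\le i$, which is useful in its own right and is implicit in the paper's central-series remark; and your argument also covers the degenerate case $n<i$, where codimension-$i$ faces of $\{0,1\}^n$ do not exist and the paper's special case is vacuous. One point both proofs lean on silently: since the coefficients ($a_S$, or the values of the simple cubes) can be negative, one needs $\alpha^{-1}$ to lie in $\aff_i(N)$, i.e.\ that height-$i$ translations are invertible with inverse again of height $i$; this is part of the (asserted) group structure of $\aff_i(N)$ and is a shared assumption, not an extra gap on your side.
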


\begin{proof} If $c$ has the special structure that it takes a value $a\in\mathbb{Z}$ on a face of co-dimension $i$ and takes $0$ on the rest of $\{0,1\}^n$ then the statement follows directly from the definition of $\aff_i(N)$.
Every other cube in $C^n(\mathcal{D}_i(\mathbb{Z}))$ can be generated by such simple cubes, so we get the general case by iterating the special case.
\end{proof}

The main result of this chapter is the following.

\begin{theorem}\label{split} Let $M$ be a degree $d$ extension of the free nilspace $F=F(a_1,a_2,\dots,a_k)$ by an abelian group $A$. Then $M$ is a split extension.
\end{theorem}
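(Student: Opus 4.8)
The plan is to construct an explicit morphism section $m\colon F\to M$ with $\pi\circ m=\mathrm{id}_F$, proceeding by induction on the extension degree $d$. The two ingredients I keep in reserve are Proposition~\ref{transext}, which lifts translations one factor at a time provided an auxiliary extension $\mathcal{T}^*$ splits, and Lemma~\ref{cubcub}, which is exactly the tool that turns a family of lifted translations into a cube-preserving map. Throughout I use that $F=\prod_{i=1}^k\mathcal{D}_i(\mathbb{Z}^{a_i})$ carries a simply transitive action of the abelian group $G=\prod_i\mathbb{Z}^{a_i}$ in which the degree-$i$ factor acts by height-$i$ translations (a basic property of the $\mathcal{D}_i$ structure).

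First I would reduce to the case in which $\mathcal{F}_{d-1}(M)$ is again a free nilspace. If $d<k$, Lemma~\ref{subdirect} (applied with $i=d$) exhibits $M$ as the subdirect product of $F$ with $K=\mathcal{F}_d(M)$, where $K$ is a degree-$d$ extension of $\mathcal{F}_d(F)=F(a_1,\dots,a_d)$ by $A$. A section $s$ of the projection $\tau\colon K\to F(a_1,\dots,a_d)$ then yields a section of $M$ by the formula $a\mapsto(a,s(\pi_d(a)))$, which lands in the subdirect product since $\tau(s(\pi_d(a)))=\pi_d(a)$, and which is a morphism because it is assembled from the morphisms $\mathrm{id}_F$, $\pi_d$ and $s$; hence it suffices to split $K$. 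Thus I may assume the base is free with at most $d$ factors, so that $\mathcal{F}_{d-1}(M)$ is free (this also covers $d\ge k$ directly, where $\mathcal{F}_{d-1}(F)=F$).

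For the base case $d=1$ the object $M$ is merely an abelian group extension $0\to A\to M\to\mathbb{Z}^{a_1}\to0$ of affine (degree one) spaces, and this splits because $\mathbb{Z}^{a_1}$ is a free, hence projective, abelian group; the group-theoretic splitting is automatically affine, i.e.\ a nilspace morphism. For the inductive step I would lift the generator translations of the base $B$ to $M$. The generators of the degree-$i$ factor with $i\le d-1$ project to height-$i$ translations of $\mathcal{F}_{d-1}(M)$, and I lift each of these using Proposition~\ref{transext}: its hypothesis asks that the corresponding $\mathcal{T}^*$ split, but $\mathcal{T}^*$ is a degree-$(d-i)$ extension of $\mathcal{F}_{d-1}(M)$, which we arranged to be \emph{free}, so the inductive hypothesis (with $d-i<d$) applies verbatim. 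The height-$d$ generator translations, present when $d\le k$, lift directly: choosing a preimage of each free generator of $\mathbb{Z}^{a_d}$ in the degree-$d$ structure group of $M$ gives height-$d$ translations of $M$ covering them, using once more that $\mathbb{Z}^{a_d}$ is free.

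Fixing a lift $\tilde o\in M$ of the origin and an ordering of the generators, I would then define $m$ by transporting $\tilde o$ along the appropriate products of lifted translations. That $m$ is a morphism is where Lemma~\ref{cubcub} enters: any cube $c\in C^n(B)$ is assembled from degree-$i$ cubes in the individual $\mathbb{Z}$-coordinates, and applying the lifted height-$i$ translations along these coordinate cubes keeps us inside $C^n(M)$ by Lemma~\ref{cubcub}, so $m\circ c\in C^n(M)$. I expect the main obstacle to be the bookkeeping that feeds Proposition~\ref{transext} and produces a genuine section: the proposition only guarantees that $\beta\in\aff_i(M)$ covers the translation on $\mathcal{F}_{d-1}(M)$, so each such $\beta$ covers the intended translation of the full base $B$ only up to a height-$d$ translation, and these discrepancies must be cancelled using the degree-$d$ lifts before $\pi\circ m=\mathrm{id}_F$ holds; moreover the lifted translations need not commute, so one must check that the commutator ambiguities land in higher structure groups and are controlled consistently by the same induction. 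The conceptual point, however, is clean: every obstruction is a lower-degree extension of a \emph{free} nilspace, and freeness of the $\mathbb{Z}^{a_i}$ is precisely what makes each such obstruction vanish.
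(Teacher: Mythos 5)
Your proposal has the same skeleton as the paper's proof: induction on the extension degree $d$; Lemma \ref{subdirect} to strip off the free factors of degree $>d$; the base case $d=1$ via freeness of $\mathbb{Z}^{a_1}$; lifting of generator translations by Proposition \ref{transext}, whose splitting hypothesis is supplied by the induction hypothesis because $\mathcal{T}^*$ is a lower-degree extension of a \emph{free} nilspace; and assembly of the section as an ordered product of lifted translations applied to a basepoint, with Lemma \ref{cubcub} giving cube-preservation. The one place you diverge is the reduction, and that divergence is exactly what produces the ``discrepancy'' problem you flag at the end and leave unresolved. The paper performs one more reduction than you do: after passing to $M_d$, a degree-$d$ extension of $F(a_1,\dots,a_d)$ by $A$, it notes that the $d$-th structure group $B$ of $M_d$ is an abelian group extension of $\mathbb{Z}^{a_d}$ by $A$, hence splits (freeness again), and then re-reads $M_d$ as a degree-$d$ extension of $F_{d-1}=F(a_1,\dots,a_{d-1})$ by $B$. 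After this renaming the base of the extension \emph{coincides} with $\mathcal{F}_{d-1}(M_d)$, so every generator has height $\le d-1$, the lifts produced by Proposition \ref{transext} cover the base translations exactly, and $\pi\circ h=\mathrm{id}$ holds automatically for the ordered product map $h$; no discrepancies ever arise, and the non-commutativity you worry about is disposed of simply by fixing the order of the product against the unique coordinate representation of each point. In your version, where for $d\le k$ the base $F(a_1,\dots,a_d)$ is strictly larger than $\mathcal{F}_{d-1}$, Proposition \ref{transext} really does control the lifted translations only modulo the $\mathbb{Z}^{a_d}$-direction, so your concern is legitimate and the proof is incomplete as written.

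The hole is patchable inside your setup. Write $h$ for your ordered-product map and $\pi$ for the projection of your reduced extension $K\to F_d:=F(a_1,\dots,a_d)$. The error $\mathrm{err}=\pi\circ h-\mathrm{id}_{F_d}$ takes values in $\mathbb{Z}^{a_d}$, and it is in fact a morphism $F_d\to\mathcal{D}_d(\mathbb{Z}^{a_d})$: for any cube $q\in C^n(F_d)$, both $\pi\circ h\circ q$ and $q$ are cubes of $F_d$ with the same projection to $F(a_1,\dots,a_{d-1})$, so by the second axiom of Definition \ref{kdegext} applied to the extension $F_d\to F(a_1,\dots,a_{d-1})$ their difference lies in $C^n(\mathcal{D}_d(\mathbb{Z}^{a_d}))$. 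Composing $\mathrm{err}$ with the group-theoretic splitting $s:\mathbb{Z}^{a_d}\to B_K$ of the top structure group of $K$ and setting $h'(x)=h(x)-s(\mathrm{err}(x))$ gives again a morphism (same axiom, now for $K$ over $\mathcal{F}_{d-1}(K)$) with $\pi\circ h'=\mathrm{id}_{F_d}$. With this correction your argument is complete and is essentially the paper's; adopting the paper's extra reduction would let you delete the correction, and the commutator discussion, entirely.
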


\begin{proof} We prove the statement by induction on $d$. If $d=1$ then using lemma \ref{subdirect} we get that the space $M$ is the direct product of $F(0,a_2,a_3,\dots,a_k)$ with an abelian group extension of $\mathbb{Z}^{a_1}$ by $A$. Since such an extension splits the case $d=1$ is done.

Assume that the statement holds for $d-1$ and $d\geq 2$.
Using lemma \ref{subdirect} we get that $M$ is the direct product of 
$\prod_{i=d+1}^k\mathcal{D}_i(\mathbb{Z}^{a_i})$ with a degree $d$ extension $M_d$ of $F_d=F(a_1,a_2,\dots,a_d)$ by $A$.
We also have that the $d$-th structure group $B$ of $M_d$ is a $d$-degree extension of $\mathbb{Z}^{a_d}$ by $A$.
It follows that $B$ is the $d$-degree structure on an abelian group extension of $\mathbb{Z}^{a_d}$ by $A$. 
Such an extension splits so it remains to show that the degree $d$ extension $M_d$ of $F_{d-1}=F(a_1,a_2,\dots,a_{d-1})$ by $B$ splits. In other words we reduced the problem to the case when $k=d-1$. By abusing the notation let us assume that $k=d-1$, $B=A$ and $F=F_{d-1}$.

We use that $\mathcal{D}_i(\mathbb{Z}^{a_i})$ is embedded into $F$ by setting all the other coordinates to $0$.
Let $S_i$ be a free generating system in $\mathbb{Z}^{a_i}$ embedded into $F$ this way (for every $1\leq i\leq d-1$).
Every element in $g\in S_i$ acts on $F$ by $x\mapsto x+g$ using the abelian group addition in $\prod_{i=1}^{d-1}\mathbb{Z}^{a_i}$.
Let us denote this action by $\alpha(g)$.
It is clear that $\alpha(g)\in\aff_i(F)$.
We claim that $\alpha(g)$ can be lifted to $M$.
Let $\mathcal{T}=\mathcal{T}(\alpha(g),M,i)$. We have by proposition \ref{transext} that $\mathcal{T}$ is an $d-i$ degree extension of $F_{d-1}$ by $A$.
Using our induction this extension splits and so there is a lift $\alpha'(g)$ of $\alpha(g)$ to $\aff_i(M)$.

The last step of the proof is to create a complement of $A$ in $M$ using the group elements $\alpha'(g)$ where $g\in S_i$.
Assume that $S_i=\{g_{i,1},g_{i,2},\dots,g_{i,a_i}\}$. We represent every element $x$ in $F$ in a unique way as
$$x=\sum_{i=1}^{d-1}\sum_{j=1}^{a_i}\lambda_{i,j}g_{i,j}.$$
Let $m\in M$ be a fixed element in the fibre of $0\in F$ in $M$. We define the map $h:F\rightarrow M$ such that $h(x)$ is the image of $m$ under the transformation
\begin{equation}\label{noncommpr}
\prod_{i=1}^{d-1}\prod_{j=1}^{a_i}\alpha'(g_{i,j})^{\lambda_{i,j}}.
\end{equation}

Note that the order in the above product is important since we multiply non commuting transformations. It remains to show that $h$ is cube preserving and $h(F)$ is a diagonal embedding of $F$ into $M$.
Lemma \ref{cubcub} shows that if a product of the form (\ref{noncommpr}) gets extended by one more term then it remains cube preserving. By induction on the length of (\ref{noncommpr}) we get that $h$ is a morphism.
It is clear from its definition that $h$ creates a diagonal embedding. 
\end{proof}

\subsection{Finite nilspaces as factors of free nilspaces}

In this chapter we establish finite nilspaces as factors of free nilspaces.
Note that in this paper free nilspaces are defined to have finite rank.

\begin{theorem}\label{freefact} For every finite $k$-step nilspace $N$ there is a factor map $h:F\rightarrow N$ from a $k$-step free nilspace $F$ with the following property. If $\phi:\mathbb{Z}^n\rightarrow N$ is a morphism then there is a morphism $\phi':\mathbb{Z}^n\rightarrow F$ such that $\phi'\circ h=\phi$.
\end{theorem}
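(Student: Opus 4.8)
The plan is to induct on the number of steps $k$. For $k=0$ the nilspace $N$ is a point and there is nothing to prove, so suppose the statement holds for $(k-1)$-step nilspaces. Write $N_{k-1}=\mathcal{F}_{k-1}(N)$, a finite $(k-1)$-step nilspace, and recall that $N$ is a degree $k$ extension of $N_{k-1}$ by the finite abelian structure group $A_k$ with projection $\pi\colon N\to N_{k-1}$. By the induction hypothesis there is a $(k-1)$-step free nilspace $F'=F(a_1,\dots,a_{k-1})$ and a factor map $h_{k-1}\colon F'\to N_{k-1}$ through which every morphism $\mathbb{Z}^n\to N_{k-1}$ lifts. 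I will produce $F$ as $F'\times\mathcal{D}_k(\mathbb{Z}^{a_k})=F(a_1,\dots,a_{k-1},a_k)$ for a suitable $a_k$, together with a factor map $h\colon F\to N$ lying over $h_{k-1}$.

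First I would pull the extension back along $h_{k-1}$: let $M=\{(x,z)\in F'\times N:h_{k-1}(x)=\pi(z)\}$ with the inherited cube structure. Exactly as in Lemma \ref{subdirect}, $M$ is a degree $k$ extension of the free nilspace $F'$ by $A_k$ (cubes lift because they lift in $N$, and two cubes over a common fibre differ by a cube of $\mathcal{D}_k(A_k)$). Theorem \ref{split} then furnishes a splitting $s\colon F'\to M$. Fix a surjection $\rho\colon\mathbb{Z}^{a_k}\to A_k$ of finite abelian groups and define $h\colon F'\times\mathcal{D}_k(\mathbb{Z}^{a_k})\to N$ by $h(x,t)=\mathrm{pr}_N(s(x))+\rho(t)$, where $+$ is the action of $A_k$ on the fibre of $N$ over $h_{k-1}(x)$ and $\mathrm{pr}_N\colon M\to N$ is the projection. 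For a cube $(c',c'')\in C^n(F)$ the map $v\mapsto\mathrm{pr}_N(s(c'(v)))$ is a cube of $N$ while $\rho\circ c''\in C^n(\mathcal{D}_k(A_k))$, so by condition (2) of Definition \ref{kdegext} their fibrewise sum $h\circ(c',c'')$ is again a cube of $N$; hence $h$ is a morphism. Since $\mathcal{F}_k(N)=N$ the relation $\sim_k$ is trivial, and $\pi\circ h=h_{k-1}\circ\mathrm{pr}_{F'}$ shows that $h$ induces $h_{k-1}$ on $\mathcal{F}_{k-1}$; surjectivity of $h$ on every $\sim_i$ class with $i\le k-1$ therefore follows from $h_{k-1}$ being a factor map, while the classes of $\sim_{k-1}$ are the top fibres, onto which $h$ restricts as $t\mapsto\mathrm{pr}_N(s(x))+\rho(t)$, surjective because $\rho$ is. Thus $h$ is a factor map.

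For the lifting property take a morphism $\phi\colon\mathbb{Z}^n\to N$. Then $\pi\circ\phi$ lifts through $h_{k-1}$ to some $\psi\colon\mathbb{Z}^n\to F'$ by induction, and $\phi_0:=\mathrm{pr}_N\circ s\circ\psi$ is a morphism $\mathbb{Z}^n\to N$ with $\pi\circ\phi_0=\pi\circ\phi$. Hence $\theta:=\phi-\phi_0$ is a well-defined $A_k$-valued map, and condition (2) of Definition \ref{kdegext} shows cube by cube that $\theta$ is a morphism $\mathcal{D}_1(\mathbb{Z}^n)\to\mathcal{D}_k(A_k)$, that is, a polynomial map of degree at most $k$. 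Writing such maps in the integer-valued binomial basis exhibits $\Hom(\mathcal{D}_1(\mathbb{Z}^n),\mathcal{D}_k(G))$ as a finite power of $G$, functorially in $G$, so the surjection $\rho$ lets me lift $\theta$ to a morphism $\theta'\colon\mathcal{D}_1(\mathbb{Z}^n)\to\mathcal{D}_k(\mathbb{Z}^{a_k})$ with $\rho\circ\theta'=\theta$. Then $\phi':=(\psi,\theta')\colon\mathbb{Z}^n\to F$ is a morphism and $h\circ\phi'=\phi_0+\theta=\phi$, as required.

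The step I expect to cause the most trouble is keeping the two roles of $\rho$ compatible: the single surjection $\mathbb{Z}^{a_k}\to A_k$ must be loose enough both to make $h$ a genuine factor map (surjective on the top fibres) and to lift the residual cocycle $\theta$, yet the whole scheme only gets off the ground because Theorem \ref{split} guarantees that the pulled-back extension over the free base splits; without that splitting there is no base map $\phi_0$ to subtract, and the reduction to lifting a single polynomial map collapses. Verifying the factor-map property at the intermediate levels $i<k-1$ (as opposed to $i=k-1,k$) also requires the bookkeeping that $h$ induces $h_{k-1}$ on all lower factors, which I would extract from the lemma preceding Lemma \ref{subdirect} relating $\mathcal{F}_i$ of an extension to $\mathcal{F}_i$ of its base.
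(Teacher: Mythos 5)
Your proposal is correct and follows essentially the same route as the paper: induct on $k$, form the subdirect product (pullback) of $N$ with the free cover of $\mathcal{F}_{k-1}(N)$, split it via Theorem \ref{split}, build $F$ by adjoining $\mathcal{D}_k(\mathbb{Z}^{a_k})$ through a surjection onto $A_k$, and lift morphisms by reducing to a degree-$k$ polynomial map into $\mathcal{D}_k(A_k)$ which lifts coefficientwise in the binomial basis. Your write-up merely makes explicit some verifications (the morphism and factor-map properties of $h$, and the polynomial-lifting step) that the paper declares clear or delegates to \cite{Sz4}, and the ``tension'' you worry about in the two roles of $\rho$ is illusory since both uses only need surjectivity.
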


\begin{proof} We proceed by induction on $k$. If $k=1$ the $N$ is an (affine) abelian group and then the result is classical.
Assume that $k\geq 2$ and assume that the statement holds for $k-1$.
Let $N$ be a fixed $k$-step nilspace.
We can regard $N$ as a $k$-degree extension of a $k-1$ step nilspace $M$ by an abelian group $A$. Let $\pi:N\rightarrow M$ be the projection. 
We use the induction hypothesis for $M$ and construct a free nilspace $F_{k-1}=F(a_1,a_2,\dots,a_{k-1})$ and factor map $h':F_{k-1}\rightarrow M$ satisfying the requirement of the lemma.
Let $Q$ be the subdirect product of $F_{k-1}$ and $N$ in the following way. The set $Q$ consists of the pairs $(a,b)$ such that $a\in F_{k-1}$,$b\in N$ and $h'(a)=\pi(b)$. It is clear that $Q$ as a subset of the nilspace $F_{k-1}\times N$ is a nilspace which is a $k$ degree extension of $F_{k-1}$ by $A$. It follows form theorem \ref{split} that $Q$ is a split extension and so $Q\simeq F_{k-1}\times\mathcal{D}_k(A)$. Let $\beta:\mathbb{Z}^r\rightarrow A$ be a surjective homomorphism between abelian groups for some natural number $r$. Then $\beta$ is also a morphism from $\mathcal{D}_k(\mathbb{Z}^r)$ to $\mathcal{D}_k(A)$.
Let $F=F_{k-1}\times\mathcal{D}_k(\mathbb{Z}^r)$ and $\gamma:F\rightarrow Q$ be the identity map on $F_{k-1}$ times $\beta$.
Let $h:F\rightarrow N$ be the composition of $\gamma$ with the projection from $Q$ to the second coordinate.
It is clear that $h$ is a factor map.
We claim that $h$ has the desired lifting property.

Let $\phi:\mathbb{Z}^n\rightarrow N$ be a morphism. First we lift $\phi$ to $Q$. According to induction we can lift $\phi\circ\pi$ to a morphism $\phi_2:\mathbb{Z}^n\rightarrow F_{k-1}$. Let $\phi_3=\phi_2\times\phi$. It is clear that $\phi_3$ maps $\mathbb{Z}^n$ to $Q$ and it lifts $\phi$. Now we have to further lift $\phi_3$ from $Q$ to $F$. Since $Q=F_{k-1}\times\mathcal{D}_k(A)$ we can write $\phi_3$ as $\phi_4\times\phi_5$ where $\phi_4:\mathbb{Z}^n\rightarrow F_{k-1}$ and $\phi_5:\mathbb{Z}^n\rightarrow\mathcal{D}_k(A)$. It remains to show that $\phi_5$ can be lifted to $\mathcal{D}_k(A)$.
The map $\phi_5$ is a degree $k$ polynomial map from $\mathbb{Z}^n$ to $A$.
An easy lemma in \cite{Sz4} show exactly that such a map can be lifted.
\end{proof}

\subsection{Periodicity of morphisms}

\begin{lemma}\label{specper} Let $A$ be a finite abelian group of order $n$. Then for every $k\in\mathbb{N}$ there is a natural number $\alpha$ depending on $n$ and $k$ such that any morphism $\phi:\mathcal{D}_i(\mathbb{Z})\rightarrow\mathcal{D}_k(A)$ is $n^\alpha$ periodic.
\end{lemma}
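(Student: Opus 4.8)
The plan is to reduce to the case $i=1$ and then describe the morphisms $\mathcal{D}_1(\mathbb{Z})\to\mathcal{D}_k(A)$ explicitly as polynomial maps, after which periodicity becomes an elementary statement about binomial coefficients modulo the exponent of $A$. First I would dispose of the dependence on $i$ using Lemma \ref{derivmorph}. If $i>k$ then $\phi$ is constant and any period works. If $1\le i\le k$ then that lemma says $\phi$ is also a morphism $\mathcal{D}_1(\mathbb{Z})\to\mathcal{D}_{k-i+1}(A)$, and since $k-i+1\le k$ it suffices to produce a single $\alpha$, uniform over the morphism, that works for all of $\Hom(\mathcal{D}_1(\mathbb{Z}),\mathcal{D}_k(A))$ (equivalently, for all polynomial maps of degree $\le k$).

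Next I would identify $\Hom(\mathcal{D}_1(\mathbb{Z}),\mathcal{D}_k(A))$ with the set of maps $\phi:\mathbb{Z}\to A$ satisfying $\Delta_{t_1}\cdots\Delta_{t_{k+1}}\phi\equiv 0$: evaluating the defining cube condition of $\mathcal{D}_k(A)$ on the affine cubes of $\mathcal{D}_1(\mathbb{Z})$ turns it exactly into the vanishing of the $(k+1)$-st iterated difference. By the Newton forward-difference formula each such $\phi$ is written uniquely as $\phi(x)=\sum_{j=0}^{k}\binom{x}{j}b_j$ with $b_j=(\Delta^j\phi)(0)\in A$, and conversely every $(b_0,\dots,b_k)\in A^{k+1}$ gives such a map since $\binom{x}{j}$ is integer valued. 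Consequently the unit shift $T:\phi(x)\mapsto\phi(x+1)$ acts on the coordinate vector by $b_j\mapsto b_j+b_{j+1}$ (with $b_{k+1}=0$), i.e. $T=I+N$ where $N$ is the nilpotent shift with $N^{k+1}=0$.

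Then I would compute $T^P=(I+N)^P=\sum_{j=0}^{k}\binom{P}{j}N^j$, so that $\phi$ is $P$-periodic as soon as $\binom{P}{j}\equiv 0$ modulo the exponent of $A$ for every $1\le j\le k$, since multiplication by such an integer annihilates $A$ and hence kills each operator $\binom{P}{j}N^j$. Choosing $P=n^{\alpha}$, the key estimate is the $p$-adic valuation bound $v_p\binom{n^{\alpha}}{j}\ge v_p(n^{\alpha})-v_p(j)\ge \alpha-\log_p k$, which follows from $j\binom{P}{j}=P\binom{P-1}{j-1}$ and holds for every prime $p\mid n$. Because the exponent of $A$ divides $n=|A|$, every prime relevant to the congruence divides $n$, so taking $\alpha$ larger than $\log_2 n+\log_2 k$ forces $\binom{n^{\alpha}}{j}\equiv 0$ modulo the exponent for all $1\le j\le k$; thus $T^{n^{\alpha}}=I$ and $\phi(x+n^{\alpha})=\phi(x)$ for all $x$.

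The main obstacle is the passage from the abstract morphism condition to this concrete polynomial normal form together with the verification that the unit shift acts as the unipotent operator $I+N$; once that is in place the period estimate is a routine valuation computation. The uniformity of $\alpha$ over all morphisms is then automatic, since the argument depends only on the exponent of $A$ and the nilpotence index $k+1$, never on the individual map.
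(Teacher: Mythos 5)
Your proposal is correct and follows essentially the same route as the paper: reduce to the case $i\le k$ via Lemma \ref{derivmorph}, identify morphisms $\mathcal{D}_1(\mathbb{Z})\to\mathcal{D}_{k-i+1}(A)$ with maps of the form $m\mapsto\sum_j \binom{m}{j}x_j$, and conclude periodicity with period a power of $n$. The only difference is that the paper simply asserts (citing \cite{Sz4}) that such binomial-coefficient maps are $n^{k-i+1}$-periodic, whereas you supply the verification yourself via the unipotent shift operator $T=I+N$ and the $p$-adic valuation bound on $\binom{n^\alpha}{j}$ --- a sound, if slightly less sharp, quantitative substitute for the step the paper leaves implicit.
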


\begin{proof} We use lemma \ref{derivmorph}.
If $i>k$ then $\phi$ is constant.
If $i\leq k$ then $\phi$ is a morphism from $\mathcal{D}_1(\mathbb{Z})$ to $\mathcal{D}_{k-i+1}(A)$.
It was proved in \cite{Sz4} (but it is also very easy to see) that such a function is of the form $m\rightarrow \sum_{j=0}^{k-i+1}x_j{{m}\choose{j}}$ where $x_i\in A$ for every $i$. 
Such functions are $n^{k-i+1}$ periodic.
\end{proof}

\begin{lemma}\label{per} Let $N$ be a finite $k$-step nilspace of size $|N|=n$. then there is a natural number $\alpha\in\mathbb{N}$ such that any morphism $\phi:\mathcal{D}_i(\mathbb{Z})\rightarrow N$ is $n^\alpha$ periodic.
\end{lemma}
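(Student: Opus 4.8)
The plan is to induct on the number of steps $k$, using the representation of $N$ as a degree $k$ extension of its factor $\mathcal{F}_{k-1}(N)$ by the top structure group $A=A_k$ (as in the proof of Theorem \ref{freefact}). Let $\pi:N\rightarrow M:=\mathcal{F}_{k-1}(N)$ be the projection and write $n=|N|$. The case $k=1$ (where $N$ is an abelian group with its linear structure) is precisely Lemma \ref{specper} with $k=1$, so it may serve as the base of the induction. For the inductive step I assume the statement for the $(k-1)$-step nilspace $M$; since $|M|\leq n$, there is $\beta$ (depending only on $n$ and $i$) so that the morphism $\pi\circ\phi:\mathcal{D}_i(\mathbb{Z})\rightarrow M$ is $p$-periodic with $p=n^\beta$. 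The point is to upgrade this periodicity modulo the $\sim_{k-1}$ relation to honest periodicity by controlling the residual $A$-valued discrepancy.

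The key step is to show that the \emph{discrete $p$-derivative} $\psi:\mathbb{Z}\rightarrow A$, defined by $\psi(m)=\phi(m+p)-\phi(m)$, is a genuine morphism $\mathcal{D}_i(\mathbb{Z})\rightarrow\mathcal{D}_k(A)$. This map is well defined because $p$-periodicity of $\pi\circ\phi$ forces $\phi(m+p)$ and $\phi(m)$ into a common fibre of $\pi$, which is an affine copy of $A$. To check the morphism property, note that the translation $T_p:m\mapsto m+p$ is an automorphism of $\mathcal{D}_i(\mathbb{Z})$, so for any cube $c\in C^n(\mathcal{D}_i(\mathbb{Z}))$ both $\phi\circ c$ and $\phi\circ T_p\circ c$ lie in $C^n(N)$ and have the same image under $\pi$. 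The second axiom of a degree $k$ extension (Definition \ref{kdegext}) then gives that their difference lies in $C^n(\mathcal{D}_k(A))$; but that difference is exactly $\psi\circ c$. Hence $\psi\circ c\in C^n(\mathcal{D}_k(A))$ for all $c$, so $\psi$ is a morphism into $\mathcal{D}_k(A)$, and Lemma \ref{specper} makes it $q$-periodic with $q=|A|^\gamma\leq n^\gamma$.

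Finally I recover periodicity of $\phi$ by telescoping. Iterating $\phi(m+p)=\phi(m)+\psi(m)$ and using associativity of the (free) $A$-action on each fibre yields
$$\phi(m+tp)=\phi(m)+\sum_{s=0}^{t-1}\psi(m+sp),$$
the sum being taken in $A$. Choosing $t=|A|q$, the index shift $rqp$ is a multiple of $q$ and $\psi$ is $q$-periodic, so the inner blocks $\sum_{s=0}^{q-1}\psi(m+sp)$ all coincide; the total is $|A|$ times a fixed element of $A$, hence $0$. Therefore $\phi$ is $P$-periodic with $P=|A|\,q\,p\leq n^{\beta+\gamma+1}$, giving the claim with $\alpha=\beta+\gamma+1$. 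I expect the main obstacle to be the identification of $\psi$ as a morphism into $\mathcal{D}_k(A)$ rather than merely an $A$-valued function: one must verify carefully that the extension axiom applies cube-by-cube and that the fibrewise subtraction is compatible with composition by cube morphisms, after which Lemma \ref{specper} and the telescoping are routine.
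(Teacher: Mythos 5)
Your proof is correct, and it follows the same skeleton as the paper's: induction on $k$, passing to the projection $\pi\circ\phi$ onto $\mathcal{F}_{k-1}(N)$ to get a period $p$, and then invoking Lemma \ref{specper} to handle the top structure group. The difference is in the endgame. The paper finishes more directly: for each $m$ the map $j\mapsto\phi(m+pj)$ is itself a morphism of $\mathcal{D}_i(\mathbb{Z})$ into a single fibre of $\pi$, and since a fibre carries the cube structure $\mathcal{D}_k(A_k)$, Lemma \ref{specper} applies to this restricted map and immediately gives a period of the form $p\,|A_k|^{\gamma}$ --- no telescoping and no appeal to the exponent of $A_k$ are needed. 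You instead differentiate: you form $\psi(m)=\phi(m+p)-\phi(m)$, verify via the extension axiom (Definition \ref{kdegext}, equivalently axiom 3 of Definition \ref{bundef}) that $\psi$ is a morphism into $\mathcal{D}_k(A)$, apply Lemma \ref{specper} to $\psi$, and then telescope, killing the sum with the factor $|A|$ by Lagrange's theorem. Both arguments rest on the same two structural facts (fibres are affine copies of $\mathcal{D}_k(A_k)$, and cubes over a fixed projected cube differ by cubes of $\mathcal{D}_k(A_k)$); yours trades the paper's one-line restriction-to-a-fibre observation for a cocycle-style computation, at the modest cost of an extra factor $|A|$ in the period (harmless, since $|A|$ divides $n$) and a slightly longer verification. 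Your identification of $\psi$ as a morphism is carried out correctly, so the proof stands as written.
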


\begin{proof} We prove the statement by induction on $k$. If $k=1$ then the statement (by lemma \ref{specper}) is trivial since in this case $\phi$ is an affine morphism of $\mathcal{D}_i(\mathbb{Z})$ into an abelian group. Assume that $k\geq 2$ and the statement if true for $k-1$. Assume that the structure groups of $N$ are $A_1,A_2,\dots,A_k$. Let $n_2=|\mathcal{F}_{k-1}(N)|=\prod_{i=1}^{k-1}|A_i|$. We have that $n_2$ divides $n$. We also have by induction that any morphism $\phi:\mathbb{Z}\rightarrow N$ composed with the projection map $\pi:N\rightarrow\mathcal{F}_{k-1}(N)$ is $n_2^{\alpha_2}$ periodic for some natural number $\alpha_2$. This means that for any natural number $m$ the sequence $j\mapsto\phi(m+n_2^{\alpha_2}j)$ is a morphism of $\mathcal{D}_i(\mathbb{Z})$ into a single fibre of the projection $\pi$. 

The cube structure of a fibre of $\pi$ is $\mathcal{D}_k(A)$. Lemma \ref{specper} finishes the proof.
\end{proof}

\begin{theorem}\label{moding} Let $\phi:F\rightarrow N$ be a morphism from a free group $F=F(a_1,a_2,\dots,a_k)$ to a finite nilspace $F$ with $|F|=n$. Then there is a natural number $\alpha$ such that $\phi$ factors through the map $\psi:F\rightarrow F/n^\alpha$ where $F/n^\alpha$ is the space $\prod_{i=1}^k\mathcal{D}_i((\mathbb{Z}/(n^\alpha))^{a_i})$ and $\psi$ is the map which takes every coordinate mod $n^\alpha$.
\end{theorem}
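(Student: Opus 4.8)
The plan is to produce a single exponent $\alpha$ for which $\phi$ is constant on the fibres of the coordinatewise reduction map $\psi$; once this is established, the induced set map $\bar\phi:F/n^\alpha\to N$ satisfying $\phi=\bar\phi\circ\psi$ exists automatically, and I will check at the end that it is a morphism. Here $n$ denotes the cardinality of the finite nilspace $N$. The only nontrivial input is Lemma \ref{per}: the factors of $F$ are $\mathcal{D}_i(\mathbb{Z}^{a_i})$ with $1\le i\le k$, so only finitely many degrees $i$ occur, and applying Lemma \ref{per} for each such $i$ and taking the maximum of the resulting exponents yields one $\alpha\in\mathbb{N}$ such that every morphism $\mathcal{D}_i(\mathbb{Z})\to N$ with $1\le i\le k$ is $n^\alpha$ periodic.

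Next I would isolate a single coordinate. Writing a point of $F$ as $(x^{(1)},\dots,x^{(k)})$ with $x^{(i)}\in\mathbb{Z}^{a_i}$, I fix every coordinate except the $j$-th entry of the $i$-th factor and let that entry run over $\mathbb{Z}$. This is the affine map $t\mapsto c+te_j$ into the $i$-th factor with the remaining factors held at constants; since affine maps preserve degree structures and a map into a product nilspace is a cube precisely when each component is a cube (constants being cubes of every degree), this is a morphism $\mathcal{D}_i(\mathbb{Z})\to F$. Composing with $\phi$ gives a morphism $\mathcal{D}_i(\mathbb{Z})\to N$, which by the choice of $\alpha$ is $n^\alpha$ periodic. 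Hence altering one entry of one factor by a multiple of $n^\alpha$, with all other coordinates fixed at arbitrary values, leaves the value of $\phi$ unchanged.

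Then I would pass from one coordinate to all of them by interpolation. Given $x,y\in F$ agreeing coordinatewise modulo $n^\alpha$, I change the finitely many coordinates one at a time, each step moving a single entry by a multiple of $n^\alpha$ while keeping the others at whatever value they currently hold. Each step preserves $\phi$ by the single-coordinate statement of the previous paragraph, so after finitely many steps $\phi(x)=\phi(y)$. Thus $\phi$ is constant on the fibres of $\psi$ and factors as $\phi=\bar\phi\circ\psi$. Finally, $\psi$ is a surjective morphism of nilspaces under which every cube of $F/n^\alpha$ lifts to a cube of $F$, so $\bar\phi$ applied to a cube equals $\phi$ applied to a lift, which lies in $C^n(N)$; hence the induced $\bar\phi$ is itself a morphism.

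The step I expect to be the crux is the uniform choice of period $\alpha$, independent of which of the $\sum_i a_i$ coordinates is being varied. This is exactly where the finiteness of $N$ (and hence of $n$) together with the finite range $1\le i\le k$ of degrees in $F$ is used: Lemma \ref{per} supplies periodicity for each fixed degree $i$, and the maximum over the finitely many occurring degrees removes any dependence of $\alpha$ on the coordinate. Everything else, namely that the one-parameter restrictions are genuine nilspace morphisms and that the interpolation preserves $\phi$ at each step, is routine.
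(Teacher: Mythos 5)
Your proof is correct and takes essentially the same route as the paper's: the paper likewise applies Lemma \ref{per} to conclude that shifting a single coordinate of $F$ by a multiple of $n^\alpha$ (all others held fixed) leaves $\phi$ unchanged, and then finishes by "using all the generators of each component $\mathbb{Z}^{a_i}$" --- precisely your coordinate-by-coordinate interpolation. Your write-up just makes explicit what the paper leaves implicit, namely the uniformity of $\alpha$ over the finitely many degrees $i$ and the verification that the induced map $\bar\phi$ on $F/n^\alpha$ is itself a morphism via lifting cubes through $\psi$.
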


\begin{proof} It follows from lemma \ref{per} that if $g$ is an element in $F$ which has only one nonzero coordinate then the map $j\rightarrow\phi(z+g(jn^\alpha))$ on $\mathbb{Z}$ is constant for every $z\in F$ using addition in the abelian group $\prod_{i=1}^k\mathbb{Z}^{a_i}$. This periodicity using all the generators of each component $\mathbb{Z}^{a_i}$ implies the statement.
\end{proof}

\subsection{Lifting morphisms}

\begin{definition} A hight $i$ extension of an abelian group $A$ of rank $r$ and exponent $e$ is an abelian group $B$ which is an extension of $A$ with rank $r$ and exponent dividing $e^i$.
\end{definition}

\begin{definition} We denote by $F_n(a_1,a_2,\dots,a_k)=\prod_{i=1}^k\mathcal{D}_i(\mathbb{Z}_n^{a_k})$ and we call it the modulo $n$ free nilspace. 
\end{definition}

\begin{lemma}\label{invprep} Let $N$ be a finite nilspace of exponent $e$, $A$ be an abelian group of exponent $e$ and $\phi:A\rightarrow N$ be a morphism. Then there is a morphism $\psi:B\rightarrow F$ from some hight $i$ extension $B$ of $A$ to a modulo $e^{\alpha}$ free nilspace $F=F_{e^\alpha}(a_1,a_2,\dots,a_k)$ such that there is a factor map $\beta:F\rightarrow N$ with $\psi\circ\beta=\pi\circ\phi$ where $\pi:B\rightarrow A$ is the projection map. The value $\alpha$ and the number $i$ depends only on the structure of $N$.
\end{lemma}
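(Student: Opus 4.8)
The plan is to manufacture the target data $F$ and $\beta$ from the free cover of $N$, and then to lift $\phi$ through this cover, using periodicity to descend the lift to a finite group. First I would apply Theorem \ref{freefact} to $N$ to obtain a $k$-step free nilspace $F_0=F(a_1,\dots,a_k)$ together with a factor map $h:F_0\to N$ enjoying the lifting property for morphisms out of $\mathbb{Z}^n$. Since $N$ is finite, Theorem \ref{moding} shows that $h$ factors through a coordinate-wise reduction $\psi_0:F_0\to F_0/|N|^{\alpha_0}$, say $\psi_0\circ\beta=h$. Because $N$ has exponent $e$, every prime dividing $|N|$ divides $e$ and $|N|$ divides $e^c$ for some $c$ depending only on $N$; hence the period $|N|^{\alpha_0}$ divides $e^\alpha$ with $\alpha=c\alpha_0$, so $h$ already factors through the coarser reduction onto $F:=F_{e^\alpha}(a_1,\dots,a_k)$. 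This produces the factor map $\beta:F\to N$ (it is a factor map because $h$ is one and the reduction $\psi_0$ is a surjective factor map of free nilspaces, so every $\sim_i$ class of $F$ is the $\psi_0$-image of a $\sim_i$ class of $F_0$), with $\alpha$ depending only on $N$.

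Next I would lift $\phi$. Put $r=\rk(A)$ and choose a surjection $p:\mathbb{Z}^r\to A$; then $\Phi:=p\circ\phi:\mathcal{D}_1(\mathbb{Z}^r)\to N$ is a morphism. The lifting property in Theorem \ref{freefact} gives $\Phi':\mathbb{Z}^r\to F_0$ with $\Phi'\circ h=\Phi$. Composing with the reduction yields a morphism $\Phi'':=\Phi'\circ\psi_0:\mathcal{D}_1(\mathbb{Z}^r)\to F$ into the \emph{finite} nilspace $F$. Now Theorem \ref{moding}, applied to the free nilspace $\mathcal{D}_1(\mathbb{Z}^r)=F(r,0,\dots,0)$ and the finite target $F$, shows $\Phi''$ is periodic; since $|F|$ is a power of $e$, it factors through $q:\mathbb{Z}^r\to B:=\mathbb{Z}_{e^j}^r$ for some $j$ depending only on $F$ (hence only on $N$), giving a morphism $\psi:B\to F$ with $q\circ\psi=\Phi''$. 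As $A$ has exponent $e\mid e^j$, the map $p$ kills $e^j\mathbb{Z}^r$ and therefore factors as $p=q\circ\pi$ with $\pi:B\to A$ a surjection; since $B=\mathbb{Z}_{e^j}^r$ has rank $r=\rk(A)$ and exponent $e^j$, it is a hight $i$ extension of $A$ with $i=j$.

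Finally I would check commutativity. Precomposing with the surjection $q$ and chasing the constructions gives $q\circ\psi\circ\beta=\Phi''\circ\beta=\Phi'\circ\psi_0\circ\beta=\Phi'\circ h=\Phi=p\circ\phi=q\circ\pi\circ\phi$; since $q$ is surjective this yields $\psi\circ\beta=\pi\circ\phi$, as required. The main obstacle, and the precise reason one cannot take $B=A$, lies in the periodicity bookkeeping of the previous paragraph: the lifted morphism $\Phi''$ is only guaranteed to be $e^j$-periodic rather than $e$-periodic, so it need not descend to $A$ itself, which forces the passage to the hight $i$ extension $B=\mathbb{Z}_{e^j}^r$. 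Making $\alpha$ and $i=j$ depend only on $N$ rests on the fact that the periods supplied by Lemma \ref{per} and Theorem \ref{moding} are uniform over all morphisms out of a free nilspace and are controlled purely by the target nilspace, not by $r$.
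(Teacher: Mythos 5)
Your proposal is correct and takes essentially the same route as the paper's own proof: obtain the free cover $h$ of $N$ from Theorem \ref{freefact}, descend it via Theorem \ref{moding} to a factor map $\beta$ from a modulo $e^\alpha$ free nilspace, lift $\pi\circ\phi$ through the cover, and use the periodicity from Lemma \ref{per}/Theorem \ref{moding} to factor the lifted morphism through the bounded hight extension $B=\mathbb{Z}_{e^j}^r$ of $A$. The only difference is that you make explicit several points the paper leaves implicit (the bookkeeping $|N|\mid e^c$, the verification that $\beta$ is still a factor map after reduction, and the final commutativity chase), which is a faithful filling-in rather than a new approach.
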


\begin{proof} Assume that $A$ is of rank $r$. This means that we can write $A$ as a factor group of $\mathbb{Z}^r$. Theorem \ref{freefact} and theorem \ref{moding} imply that the statement is true if $B$ is replaced by $\mathbb{Z}^d$. It remains to show that a morphisms $\psi':\mathbb{Z}^d\rightarrow F_{e^\alpha}(a_1,a_2,\dots,a_k)$ factors through a bounded hight extension of $A$. This follows form lemma \ref{per} since $\psi'$ has to be periodic in each coordinate with a bounded power of $e$.
\end{proof}

We get a further strengthening of the previous lemma from using the fact that $\beta$ is a factor map. Since the $\sim_{k-1}$ classes of $F$ are mapped surejectively to the $\sim_{k-1}$ classes in $N$ (and the $\sim_{k-1}$ classes are copies of $\mathcal{D}_k(A_k)$) the map $\beta$ factors through the map 
\begin{equation}\label{redukalt}
F_{e^\alpha}(a_1,a_2,\dots,a_k)\rightarrow F_{e^\alpha}(a_1,a_2,\dots,a_{k-1})\times\mathcal{D}_{k}(A_k)=F'
\end{equation}
where $A_k$ is the $k$-th structure group of $N$.
We formulate it as a separate lemma.

\begin{lemma}\label{invprep2} In Lemma \ref{invprep} the nilspace $F$ can be replaced by $F'$ in (\ref{redukalt}).
\end{lemma}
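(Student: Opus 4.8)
The plan is to feed the output of Lemma~\ref{invprep} through the projection $q\colon F\to F'$ of~(\ref{redukalt}). Recall that Lemma~\ref{invprep} supplies a height $i$ extension $B$ of $A$, a morphism $\psi\colon B\to F$ with $F=F_{e^\alpha}(a_1,\dots,a_k)$, and a factor map $\beta\colon F\to N$ satisfying $\psi\circ\beta=\pi\circ\phi$. Since $F'$ differs from $F$ only in its top factor, $q$ will be the product of the identity on $F_{e^\alpha}(a_1,\dots,a_{k-1})$ with a map $\mathcal{D}_k(\mathbb{Z}_{e^\alpha}^{a_k})\to\mathcal{D}_k(A_k)$ induced by a surjection $\theta\colon\mathbb{Z}_{e^\alpha}^{a_k}\to A_k$ to be pinned down below; it is visibly a factor map. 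The whole lemma then reduces to one claim: that $\beta$ factors through $q$, i.e. there is a factor map $\beta'\colon F'\to N$ with $q\circ\beta'=\beta$. Granting this, put $\psi'=\psi\circ q\colon B\to F'$; then $\psi'\circ\beta'=\psi\circ q\circ\beta'=\psi\circ\beta=\pi\circ\phi$, which is exactly the conclusion of Lemma~\ref{invprep} with $F'$ in place of $F$.

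To manufacture $\theta$ and $\beta'$ I use that both $F$ and $N$ are degree $k$ extensions of their $(k-1)$-step factors $\mathcal{F}_{k-1}(F)=F_{e^\alpha}(a_1,\dots,a_{k-1})$ and $\mathcal{F}_{k-1}(N)$, with top structure groups $\mathbb{Z}_{e^\alpha}^{a_k}$ and $A_k$, and that $\beta$ is a factor map. By definition of a factor map, $\beta$ carries each $\sim_{k-1}$ class of $F$ \emph{onto} a $\sim_{k-1}$ class of $N$; these classes are affine copies of $\mathcal{D}_k(\mathbb{Z}_{e^\alpha}^{a_k})$ and of $\mathcal{D}_k(A_k)$ respectively. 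Restricted to one such fibre, $\beta$ is therefore a surjective morphism $\mathcal{D}_k(\mathbb{Z}_{e^\alpha}^{a_k})\to\mathcal{D}_k(A_k)$, and by Lemma~\ref{derivmorph} (case $i=j=k$) this is a morphism of the linear structures $\mathcal{D}_1$, i.e. an affine homomorphism; after fixing base points its linear part is a surjective group homomorphism $\theta\colon\mathbb{Z}_{e^\alpha}^{a_k}\to A_k$. Using this $\theta$ to build $q$, I check that $\beta$ is constant on the fibres of $q$: if $q(x)=q(y)$ then $x$ and $y$ share all coordinates except the top one, and their top coordinates differ by an element of $\ker\theta$, so in particular $x\sim_{k-1}y$; since the restriction of $\beta$ to that $\sim_{k-1}$ class has linear part $\theta$, we get $\beta(x)=\beta(y)$. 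Hence $\beta$ descends to a well-defined $\beta'$ on $F'$, and $\beta'$ inherits the factor-map property from $\beta$ and the surjectivity of $q$.

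The one point genuinely needing care --- and which I expect to be the real content --- is that the fibrewise homomorphism $\theta$ is the \emph{same} for every $\sim_{k-1}$ class, so that $q$ is a single globally defined product map and the phrase ``the linear part of $\beta$'' above is legitimate. This is precisely the assertion that a factor map between degree $k$ bundles induces a well-defined homomorphism of top structure groups; it holds because translation by a fixed top-group element is a single height $k$ translation of the product $F$, which the morphism $\beta$ must carry to a height $k$ translation of $N$ in $\aff_k(N)$, and the resulting intertwining is established in~\cite{NP}. The fact that $F$ is literally a direct product is what makes the construction clean: its top factor is already split off, so no splitting argument is needed and $q=\mathrm{id}\times\mathcal{D}_k(\theta)$ outright. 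The remaining checks --- that $\beta'$ is a morphism and a factor map, and that the commuting square survives --- are routine.
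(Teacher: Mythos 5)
Your proof is correct and takes essentially the same route as the paper: the paper's own (very terse) argument is exactly that the factor-map property of $\beta$ forces each $\sim_{k-1}$ class of $F$ to map onto a $\sim_{k-1}$ class of $N$ (an affine copy of $\mathcal{D}_k(A_k)$), so that $\beta$ factors through the projection in (\ref{redukalt}). Your writeup in fact supplies details the paper omits --- the fibrewise affine structure via Lemma~\ref{derivmorph} and the uniformity of the induced homomorphism $\theta$ across fibres --- and correctly identifies the latter as the point to be sourced from \cite{NP}.
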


\section{Applications to the Gowers norms}

Let $\mathfrak{A}_e$ denote the family of finite abelian groups of exponent $e$.
A $\mathfrak{A}_e$ nilspace is a nilspace whose structure groups are all in $\mathfrak{A}_e$.
In particular such nilspaces are all finite.
A $\mathfrak{A}_e$ nilspace polynomial on $A\in\mathfrak{A}_e$ is a composition of a morphism $\phi:A\rightarrow N$ with $g:N\rightarrow\mathbb{C}$ where $N$ is a $\mathfrak{A}_e$ nilspace and $|g|\leq 1$.

The next regularity lemma was proved in \cite{Sz4}.
\begin{theorem}[Regularization in $\mathfrak{A}_e$.]\label{reglem} Let $k$ be a fixed number and $F:\mathbb{R}^+\times\mathbb{N}\rightarrow\mathbb{R^+}$ be an arbitrary function. Then for every $\epsilon>0$ there is a number $n=n(\epsilon,F)$ such that for every measurable function $f:A\rightarrow\mathbb{C}$ on $A\in\mathfrak{A}_e$ with $|f|\leq 1$ there is a decomposition $f=f_s+f_e+f_r$ and number $m\leq n$ such that the following conditions hold.
\begin{enumerate}
\item $f_s$ is a degree $k$, complexity $m$ and $F(\epsilon,m)$-balanced $\mathfrak{A}_e$-nilspace-polynomial,
\item $\|f_e\|_1\leq\epsilon$,
\item $\|f_r\|_{U_{k+1}}\leq F(\epsilon,m)$~,~$|f_r|\leq 1$ and $|(f_r,f_s+f_e)|\leq F(\epsilon,m)$.
\end{enumerate}
\end{theorem}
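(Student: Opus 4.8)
The plan is to prove this through the measure-theoretic correspondence principle that underlies \cite{Sz4}, rather than through an energy-increment argument that invokes an inverse theorem; the latter would be circular here, since the inverse theorems of the present paper are deduced \emph{from} this regularity lemma. The first step is to reduce the finitary statement to a clean qualitative statement on a limit object. Fix a sequence $A_1, A_2, \dots$ of groups in $\mathfrak{A}_e$ and form their ultraproduct $\mathbf{A} = \prod_{\omega} A_n$, equipped with its Loeb probability measure $\mu$. The uniformity norms pass to the limit as genuine seminorms $\|\cdot\|_{U_{k+1}}$ on $L^\infty(\mathbf{A}, \mu)$, and a bounded measurable function on a group $A_n$ lifts to a bounded measurable function $\mathbf{f}$ on $\mathbf{A}$.

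The core of the argument is the structure theorem for the top factor. One produces a $\sigma$-algebra $\mathcal{F}_{\le k}$ on $\mathbf{A}$ characterised by the property that $\|\mathbf{g}\|_{U_{k+1}} = 0$ if and only if $\mathbb{E}(\mathbf{g} \mid \mathcal{F}_{\le k}) = 0$, and one then sets $P = \mathbb{E}(\mathbf{f} \mid \mathcal{F}_{\le k})$. The essential point is that the factor space $(\mathbf{A}, \mathcal{F}_{\le k})$ carries the structure of a compact $k$-step nilspace all of whose structure groups have exponent dividing $e$, so that $P$ factors as $g \circ \phi$ through a morphism $\phi : \mathbf{A} \to N$ into such a nilspace, with $|g| \le 1$. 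Approximating $g$ uniformly and $N$ by finite data yields a bounded nilspace polynomial $\mathbf{f}_s$ of some finite complexity $m$ with $\|P - \mathbf{f}_s\|_1 \le \epsilon$; setting $\mathbf{f}_e = P - \mathbf{f}_s$ gives the small error term.

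What makes the arbitrary growth function $F$ harmless is that on the limit the remainder behaves perfectly. Put $\mathbf{f}_r = \mathbf{f} - P$. Then $\|\mathbf{f}_r\|_{U_{k+1}} = 0$ by the defining property of $\mathcal{F}_{\le k}$; moreover $\mathbf{f}_r$ is orthogonal to every $\mathcal{F}_{\le k}$-measurable function, so $(\mathbf{f}_r, \mathbf{f}_s + \mathbf{f}_e) = (\mathbf{f}_r, P) = 0$; and the canonical morphism into a finite nilspace pushes $\mu$ forward to the uniform measure, so $\mathbf{f}_s$ is exactly balanced. Thus on the limit the three conditions hold with the right-hand sides equal to $0$, in particular below any prescribed value $F(\epsilon, m)$. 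It remains to descend to the finite groups with a uniform bound $n(\epsilon, F)$. This is a compactness argument: if no such $n$ existed, then for some $\epsilon$ and some $F$ there would be, for each $n$, a group $A_n \in \mathfrak{A}_e$ and a function $f_n$ admitting no admissible decomposition of complexity at most $n$; forming the ultraproduct and applying the limit decomposition above at some finite complexity $m_0$, the transfer principle would produce, for all $n$ large in the ultrafilter, an admissible decomposition of $f_n$ of complexity $m_0 \le n$, a contradiction.

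I expect the main obstacle to be the structure theorem itself: showing that the degree-$k$ factor $(\mathbf{A}, \mathcal{F}_{\le k})$ is genuinely a compact $k$-step nilspace, and not merely an abstract measurable factor, and that every $\mathcal{F}_{\le k}$-measurable function is approximable in $L^1$ by continuous nilspace polynomials of \emph{finite} complexity. This identification of the Gowers factor with a nilspace is where essentially all the depth resides; by comparison the ultraproduct formalism and the compactness descent are routine.
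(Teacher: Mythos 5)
Theorem \ref{reglem} is not proved in this paper at all: it is imported verbatim from \cite{Sz4} (``The next regularity lemma was proved in \cite{Sz4}''), so there is no internal proof to compare against. Your sketch is, in essence, a reconstruction of the strategy of \cite{Sz4}: ultraproduct with Loeb measure, the $\sigma$-algebra $\mathcal{F}_{\le k}$ on which $U_{k+1}$ is a norm, the identification of this Gowers factor with a (inverse limit of) compact nilspace(s) whose structure groups have exponent $e$, and a compactness descent to recover the finitary statement with an arbitrary growth function $F$. Two caveats. First, as you yourself note, the entire mathematical content lives in the structure theorem identifying $(\mathbf{A},\mathcal{F}_{\le k})$ with a nilspace and approximating its measurable functions by finite-complexity nilspace polynomials; deferring this means your text is a correct roadmap rather than a proof, and that deferred step is precisely the theorem of \cite{Sz4} being cited. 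Second, the descent is less routine than ``the transfer principle would produce'': the structured part $\mathbf{f}_s=g\circ\phi$ lives on the ultraproduct, and one must show that the morphism $\phi:\mathbf{A}\to N$ into the finite nilspace is (up to negligible error) an ultralimit of morphisms $\phi_n:A_n\to N$, so that the decomposition really exists on $\omega$-many of the $A_n$; this is exactly where the $F(\epsilon,m)$-balancedness condition in the statement earns its keep, since it is the quantitative measure-preservation property that survives the limit and makes the finite-level approximations usable. With those two points acknowledged, your approach is the right one and coincides with that of the cited source.
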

The notion of $b$-balanced is not crucial in this paper but it expresses a strong measure preserving property.
According to this theorem, if we want to get an inverse theorem in $\mathfrak{A}_e$ all we need to do is to write $\mathfrak{A}_e$ nilspace polynomials as linear combinations in a suitable basis $\mathcal{B}$. If we can have such a linear combination with boundedly many elements then we get correlation with a function from $\mathcal{B}$.

Let $\phi:A\rightarrow N$ be a morphism from $A\in\mathfrak{A}_e$ to a $\mathfrak{A}_e$-nilspace $N$. Assume that $N$ is a $k$-step nilspace and has size at most $m$.
We use lemma \ref{invprep2} for $\phi$. We obtain a morphism $\psi:B\rightarrow F'$ where $F'$ has the form (\ref{redukalt}).
The group $B\in\mathfrak{A}_{e^i}$ is a bounded hight extension of $A$. Let $\phi'$ denote the composition of the projection $B\rightarrow A$ with $\phi$.
We have that $\phi'$ factors through $\psi$.
This means that nilspace polynomials using $\phi'$ can be also obtained as nilspace polynomials using $\psi$.
Let $\mathcal{B}$ be the set of functions on $B$ which are obtained by composing a linear character $\chi$ of 
$\prod_{i=1}^{k-1}\mathbb{Z}_{e^\alpha}^{a_i}\times A_k$ with $\psi$. Let $\chi=\prod_{i=1}^k\chi_i$ where $\chi_i$ is a character of $\mathbb{Z}_{e^\alpha}^{a_i}$ is $1\leq i\leq k-1$ and $\chi_k$ is a character of $A_k$.
We have that $\chi(\psi(x))=\prod_{i=1}^k\chi_i(\psi_i(x))$ where $\psi_i$ is a morphism from $B$ to $\mathcal{D}_i(\mathbb{Z}_{e^\alpha}^{a_i})$. In other words $\psi_i$ is a polynomial map of degree $i$ and $x\mapsto\chi_i(\psi_i(x))$ is a phase polynomial of degree $i$. Furthermore since $A_k$ has exponent $e$ we have that $\chi_k(\psi_k)$ takes only $e$-th roots of unities.

This completes the proof of both theorem \ref{boundexinv} and theorem \ref{boundext}

\subsection{On the Tao-Ziegler theorem}

Let $p$ be a fixed prime number. We apply theorem \ref{boundext} inductively on $k$.
Assume that the inverse theorem is established for degree $k-1$ and $k\leq p-1$.
Let $f$ be a function on $A=\mathbb{Z}_p^n$ such that $|f|\leq 1$ and $\|f\|_{U_{k+1}}\geq\epsilon$.
We have by theorem \ref{boundext} that there is a group extension $B\rightarrow A$ (where $B$ has rank $n$) and a phase polynomial $\phi$ on $B$ of degree $k$ such that $(\tau\circ f,\phi)>\delta$.
Furthermore we have that $\phi=\phi_1\phi_2$ where $\phi_1$ is of degree $k-1$ and $\phi_2$ takes only $p$-th roots of unities. 

We claim that $\phi_2=\tau\circ\phi_3$ where $\phi_3$ is a phase polynomial on $A$.
Since $B$ has rank $n$ we can write $B$ as a factor group of $\mathbb{Z}^n$ with homomorphism $\tau':\mathbb{Z}^n\rightarrow B$.
Let us lift $\phi_2$ to a phase polynomial $\phi_4$ on $\mathbb{Z}^n$ by composing it with $\tau'$. 
Since $\phi_4^p=1$ we can obtain $\phi_4$ from a polynomial map $l:\mathbb{Z}^n\rightarrow\mathbb{Z}_p$ of degree $k$.
Such polynomial maps (see \cite{Sz4}) are linear combinations of functions $(x_1,x_2\dots,x_n)\rightarrow \prod_{i=1}^n{{x_i}\choose{r_i}}$ where $\sum r_i=k$.  
This show that the value of any such map depends only of the residue classes of $x_i$ modulo $p$. 
The claim is proved.

Let $\gamma$ be the projection of $\phi_1\phi_2$ to $A$. Since $\phi_2$ factors through $\tau$ we have that $\gamma=\gamma_2\phi_3$ where $\gamma_2$ is the projection of $\phi_1$.
To finish the proof we need to show that $\gamma_2$ can be well approximated by a bounded linear combination of phase polynomials on $A$ of degree $k-1$.
Intuitively this follows from our induction hypothesis and the fact that $\gamma_2$ is a "purely structured" degree $k-1$ function. In the rest of the proof we show a lemma which makes this intuition precise.

\begin{lemma} Let $k$ be a value such that theorem \ref{TZth} holds. Then for every $\epsilon_1,\epsilon_2>0$ there is a number $m$ such that if $\|f\|_{U_{k+1}}\geq\epsilon_1$ holds for a projected phase polynomial $f$ on $A$ then 
$$\|f-\sum_{i=1}^m\lambda_i\beta_i\|_2\leq\epsilon_2$$
holds for some linear combination $\sum\lambda_i\beta_i$ of degree $k$ phase polynomials $\beta_i$ with $|\lambda_i|\leq 1$.
\end{lemma}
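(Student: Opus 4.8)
The plan is to prove this by an energy-increment argument against the finite dictionary $\Phi$ of all degree $k$ phase polynomials on $A$, converting the purely structured nature of $f$ into an $L^2$ approximation. Write $f=\tau_*\phi$ for the defining extension $\tau:B\rightarrow A$ and degree $k$ phase polynomial $\phi$ on $B$, so that the pullback $f\circ\tau$ is the conditional expectation of $\phi$ over the fibres of $\tau$. Each $\beta\in\Phi$ satisfies $|\beta|\le 1$, hence $\|\beta\|_2=1$. I would build an increasing sequence of subspaces $V_j=\mathrm{span}(\beta_1,\dots,\beta_j)$ by letting $g_j$ be the orthogonal projection of $f$ onto $V_j$ and $r_j=f-g_j$ the residual; as long as some $\beta\in\Phi$ has $|(r_j,\beta)|>\eta$, adjoin such a $\beta$ as $\beta_{j+1}$. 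Each newly chosen $\beta_{j+1}$ has orthogonal component of norm at least $\eta$ beyond $V_j$, so $\|g_{j+1}\|_2^2\ge\|g_j\|_2^2+\eta^2$; since $\|f\|_2\le 1$ this stops after $m\le\eta^{-2}$ steps, producing $g_m=\sum_{i=1}^m\lambda_i\beta_i$ with $(r_m,g_m)=0$ and $|(r_m,\beta)|\le\eta$ for every $\beta\in\Phi$.

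Next I would use that $f$ is a projected phase polynomial to control the residual. Because $f=\tau_*\phi$, the identity $(r_m,f)_A=(r_m\circ\tau,\phi)_B$ holds, and the purely structured property established above — correlation of a function with a degree $k$ phase polynomial is dominated by its $U_{k+1}$ norm, together with invariance of the norm under pullback, $\|r_m\circ\tau\|_{U_{k+1}(B)}=\|r_m\|_{U_{k+1}(A)}$ — gives $|(r_m,f)_A|\le\|r_m\|_{U_{k+1}}$. Since $(r_m,g_m)=0$ we have $\|r_m\|_2^2=(r_m,f)\le\|r_m\|_{U_{k+1}}$, so it suffices to force $\|r_m\|_{U_{k+1}}\le\epsilon_2^2$.

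To bound $\|r_m\|_{U_{k+1}}$ I would argue by contradiction with Theorem~\ref{TZth}: if $\|r_m\|_{U_{k+1}}>\epsilon_2^2$ then $r_m$ correlates with some degree $k$ phase polynomial, contradicting $|(r_m,\beta)|\le\eta$ once $\eta$ is small. The point that makes this legitimate is that $r_m$ is $L^\infty$-bounded by a constant $L=L(\eta)$ depending only on $\epsilon_2$ and not on $|A|$: the selected $\beta_i$ form an $\eta$-well-conditioned system (each has orthogonal component at least $\eta$ beyond the previous span), so their Gram matrix is boundedly invertible, the Gram--Schmidt orthonormalisation of the $\beta_i$ has $L^\infty$ norm bounded in terms of $\eta$, and since $|f|\le 1$ and $m\le\eta^{-2}$ the projection $g_m$ and hence $r_m$ are bounded by such an $L$. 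Applying Theorem~\ref{TZth} to the honestly bounded function $r_m/L$ then yields the required phase polynomial. The same well-conditioning bounds the coefficients $|\lambda_i|$ in terms of $\eta$, after which splitting each $\lambda_i$ into boundedly many summands of modulus at most $1$ (enlarging $m$ by a bounded factor) enforces $|\lambda_i|\le 1$.

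The main obstacle is the quantitative closing of the loop: the sup-norm bound $L(\eta)$ grows as $\eta\to 0$ while the correlation guaranteed by Theorem~\ref{TZth} for $r_m/L$ is only $\delta(\epsilon_2^2/L)$, so one must exhibit $\eta$ with $\eta<L(\eta)\,\delta(\epsilon_2^2/L(\eta))$. I expect the cleanest way around this is to run the argument qualitatively: assume the lemma fails, extract a sequence of projected phase polynomials $f_t$ with $\|f_t\|_{U_{k+1}}\ge\epsilon_1$ that resist $t$-term approximation, and derive a contradiction from an energy increment that must make progress of a uniform size at every step where $\|r\|_2>\epsilon_2$ — the progress being uniform precisely because $\|r\|_2^2\le\|r\|_{U_{k+1}}$ and the residual is $L^\infty$-bounded by $L(\eta)$. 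All the remaining ingredients are soft: Hilbert-space geometry for the increment and the single identity $(r,f)_A=(r\circ\tau,\phi)_B$ coming from the definition of a projected phase polynomial.
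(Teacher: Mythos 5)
Your finitary strategy diverges from the paper's proof, and it contains a genuine gap at exactly the point you flag but do not resolve: the circularity between the conditioning bound $L(\eta)$ and the correlation function $\delta(\cdot)$ of Theorem \ref{TZth}. Your proposed escape --- ``run the argument qualitatively,'' claiming the energy increment ``must make progress of a uniform size at every step where $\|r\|_2>\epsilon_2$'' --- is false as stated. The progress at step $j$ is not uniform: it is at least $\bigl(L_j\,\delta(\epsilon_2^2/L_j)\bigr)^2$, where $L_j$ is the $L^\infty$ bound on the residual, and $L_j$ grows with $j$ through the Gram--Schmidt conditioning of the previously selected polynomials (your own estimate gives $L_j$ roughly exponential in $j\log(1/\eta_j)$). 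Theorem \ref{TZth} is purely qualitative, so $\delta$ may decay arbitrarily fast; consequently the per-step increments $\underline{\eta}_j^2=\delta(\epsilon_2^2/L_j)^2$ can form a convergent series with sum below $\|f\|_2^2\leq 1$, in which case the constraint ``total energy $\leq 1$'' never forces termination after a number of steps bounded independently of $A$. For each fixed finite group the process halts for dimension reasons, but that gives $m=m(A)$, not the uniform $m=m(\epsilon_1,\epsilon_2)$ the lemma requires. So the loop is not closed, qualitatively or quantitatively. (The rest of your setup is sound: the identity $\|r\|_2^2=(r,f)_A=(r\circ\tau,\phi)_B\leq\|r\|_{U_{k+1}}$ is correct and elegant, and the well-conditioning bound $L(\eta)<\infty$ is correct; the failure is only in deducing a uniform step count from it.)

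The paper avoids this issue entirely by going soft in a different way: it takes a sequence of counterexamples, passes to the ultraproducts $\mathbf{A}$, $\mathbf{B}$, and works inside the $\sigma$-algebra $\mathcal{F}_k(\mathbf{A})$ of \cite{Sz1} on which $U_{k+1}$ is a norm. There the projected phase polynomial $f$ is shown to be $\mathcal{F}_k(\mathbf{A})$-measurable (projection along a coset $\sigma$-algebra preserves measurability), and Theorem \ref{TZth} is used only once, in a soft orthogonality argument, to show that ultralimits of phase polynomials are dense in $L^2(\mathcal{F}_k(\mathbf{A}))$: a nonzero $g\perp$ to all of them would have positive $U_{k+1}$ norm but asymptotically vanishing correlation with every phase polynomial. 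Then $f$ lies in the closed span, a finite subsum approximates it within $\epsilon_2/2$, and this transfers back to infinitely many of the finite groups, contradicting the counterexample sequence. The crucial structural advantage over your approach is that membership of $f$ in the closed span of a $\sigma$-algebra's $L^2$ space replaces any projection/conditioning estimates, so no analogue of $L(\eta)$ ever appears and no quantitative loop needs closing. If you want a genuinely finitary proof along your lines, the known way to repair it is not a greedy energy increment but a Hahn--Banach/duality (transference) argument, which trades the conditioning problem for a separation argument; that, however, is a different proof from both yours and the paper's.
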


\begin{proof} We proceed by contradiction. Let $\epsilon_1,\epsilon_2$ be such that there is a sequence of groups $A_i=\mathbb{Z}_p^{n_i}$ with extensions $B_i$ and phase polynomials $\phi_i$ on $B_i$ such that the projection $f_i$ of $\phi_i$ has $U_{k+1}$ norm at least $\epsilon_1$, but there is no required linear combination with $i$ elements.

Let $\bA$ (resp. $\bB$) be the ultra product of the sequence $\{A_i\}_{i=1}^\infty$ (resp. $\{B_i\}_{i=1}^\infty$). Let $\phi$ be the ultra limit of the sequence $\{\phi_i\}_{i=1}^\infty$ and $f$ be the ultra limit of $\{f_i\}_{i=1}^\infty$. The theory developed in \cite{Sz1} says that there is a maximal $\sigma$-algebra $\mathcal{F}_k(\bB)$ on $\bB$ such that $U_{k+1}$ is a norm on $L^\infty(\mathcal{F}_k(\bB))$.

The next step is to show that the projection of $\phi$ to $\bA$ (which is equal to $f$) is measurable in $\mathcal{F}_k(\bA)$.
We can think about the projection to $\bA$ as the projection to a $\sigma$ algebra generated by the factor map $\bB\rightarrow\bA$. Such $\sigma$-algebras are called coset $\sigma$-algebras in \cite{Sz1} since measurable sets are unions of cosets of the kernel of the morphism from $\bB$ to $\bA$.
It was proved in $\cite{Sz1}$ that the projection of a function measurable in a shift invariant $\sigma$-algebra to a coset $\sigma$ algebra is measurable in the original $\sigma$ algebra. We obtain that a function in $\mathcal{F}_k(\bB)$ projected to $\bA$ is measurable in $\mathcal{F}_k(\bA)$. In particular $f$ is measurable in $\mathcal{F}_k(\bA)$.

Next we observe that $L^2(\mathcal{F}_k(\bA))$ is generated by ultra limits of phase polynomials. Assume that it is not true. Then there is a nonzero function $g$ (with $|g|\leq 1$) measurable in $\mathcal{F}_k(\bA)$ which is orthogonal to the space generated by the ultra limits of phase polynomials. Since $U_{k+1}$ is a norm on $L^\infty(\mathcal{F}_k(\bA))$ we have that $\|g\|_{U_k}=g>0$. Then we choose a sequence of functions $g_i$ on $a_i$ whose ultra limit is $g$. This sequence would contradict the assumption that theorem \ref{TZth} holds for $k$.

We obtain that $f=\sum_{i=1}^\infty w_i\lambda_i$ where $w_i$ are ultra limits of phase polynomials $\{w_i^j\}_{j=1}^\infty$ of degree $k$.
By repeating terms $w_i$ many times we can assume the each lambda has absolute value at most $1$. Then there is $m$ such that 
$\|f-\sum_{i=1}^m\lambda_iw_i\|_2\leq\epsilon_2/2$. This gives a contradiction since 
$\|f_i-\sum_{r=1}^m\lambda_rw_r^i\|_2<\epsilon_2$ holds for infinitely many indices $i$.

\end{proof}

\end{document}